\def\N{\mathbb{N}}
\def\Pr{\mathbb{P}}
\def\ZZ{\mathbb{Z}}
\def\le{\leqslant}
\def\ge{\geqslant}
\def\->{\rightarrow}
\def\<{\langle}
\def\>{\rangle}
\newtheorem{theorem}{Theorem}[section]
\newtheorem{lemma}[theorem]{Lemma}
\newtheorem{proposition}[theorem]{Proposition}
\newtheorem{corollary}[theorem]{Corollary}
\newtheorem{obs}[theorem]{Observation}
\newtheorem{problem}[theorem]{Problem}
\theoremstyle{remark}
\newtheorem{remark}{Remark}
\theoremstyle{remark}
\theoremstyle{remark}
\newcommand{\ssq}{\subseteq}
\begin{document}

\title{$k$-sums in abelian groups}
\date{ }
\author{
Benjamin Girard
\thanks{IMJ, \'{E}quipe Combinatoire et Optimisation, 
Universit\'{e} Pierre et Marie Curie~(Paris $6$), $4$ place Jussieu, 75005 Paris, France, email: \texttt{bgirard@math.jussieu.fr}.}
\and
Simon Griffiths
\thanks{IMPA, Estrada Dona Castorina $110$, Rio de Janeiro, Brasil, $22460$-$320$, email: \texttt{sgriff@impa.br}. Research supported by CNPq Proc. 500016/2010-2.}
\and
Yahya ould Hamidoune
}

\maketitle

\begin{abstract} 
Given a finite subset $A$ of an abelian group $G$, we study the set $k \wedge A$ of all sums of $k$ distinct elements of $A$. 
In this paper, we prove that $|k \wedge A| \ge |A|$ for all $k\in \{2,\dots ,|A|-2\}$, unless $k\in \{2,|A|-2\}$ and $A$ is a coset of an elementary $2$-subgroup of $G$. 
Furthermore, we characterize those finite sets $A\ssq G$ for which $|k \wedge A|=|A|$ for some $k\in \{2,\dots ,|A|-2\}$. 
This result answers a question of Diderrich.  Our proof relies on an elementary property of proper edge-colourings of the complete graph.
\end{abstract}

\section{Introduction}

Sumsets are the central object of study in Additive Combinatorics.  In particular, the sumset 
\[ A+B\, :=\, \{a+b:\,a\in A, b\in B\}\, \]
and the $k$-fold sumset
\[ kA\, :=\, \{a_1+\dots + a_k: \, a_i\in A\}\, \]
of subsets $A,B$ of an abelian group $G$ have been extensively studied.  For background and references we refer the reader to the course of Ruzsa~\cite{Ruzsa09} and the book of Tao and Vu~\cite{TV}.  See also recent results of Petridis~\cite{P1}.

In this paper, we consider sums of $k$ \em distinct \em elements of $A$, which we will call \em $k$-sums\em.
Given an abelian group $G$ (not necessarily finite), any finite subset $A$ of $G$ will be referred to as an \em additive set\em, and
\[k\wedge A\, :=\, \left\{\sum_{a\in B}a:\, B \ssq A, |B|=k\right\}\]
will denote the set of $k$-sums of $A$.  Many results concerning $k$-sums of subsets of abelian groups focus on the cases $k=2$ and $k=3$.  See for example the survey of Lev~\cite{Lev05} and the papers of Lev~\cite{Lev02}, Hamidoune, Llad\' o and Serra~\cite{HLS00}, and Gallardo, Grekos, Habsieger, Hennecart, Landreau and Plagne~\cite{GGHHLP02}.  
On the other hand, $k$-sums for general $k$ have been studied more extensively for sequences than for sets, with a particular emphasis on determining whether $0$ is a $k$-sum, see the seminal article of Erd\H os, Ginzburg and Ziv~\cite{EGZ61}, the article of Bollob\' as and Leader~\cite{BL99}, the survey of Gao and Geroldinger~\cite{GG06} and the references contained therein.

It is often more tricky to prove results about the set of $k$-sums $k\wedge A$ than the $k$-fold sumset $kA$.  For example, it is a straightforward consequence
of the Cauchy-Davenport Theorem~\cite{C,D} that the inequality
\[|kA|\ge \min\{p,k|A|-(k-1)\}\]
holds for each $k\in \N$ and subset $A$ of $\ZZ_p$, the integers modulo a prime $p$.  
Establishing the analogous result for $k$-sums is significantly more difficult.
Indeed, it was a major breakthrough when Dias da Silva and Hamidoune~\cite{DdSH} proved that the inequality 
\begin{equation}
|k\wedge A|\ge \min\{p,k(|A|-k)+1\} \label{THM:DdSH}
\end{equation}
holds for each $k\in \N$ and subset $A$ of $\ZZ_p$ ($p$ prime).
This result was proved by studying a general problem on Grassmann spaces, and the case $k=2$ confirmed a conjecture of Erd\H os and Heilbronn~\cite{EH}.
An alternative proof, using the Combinatorial Nullstellensatz~\cite{Alon99}, was then given by Alon, Nathanson and Ruzsa~\cite{ANR}.\vspace{0.2cm}

Our aim in this paper is to prove a much more modest estimate but in the more general setting of an \emph{arbitrary} abelian group $G$.  In the case of $k$-fold sums it is trivial that $|kA|\ge |A|$, and easily proved that equality occurs if and only if $A$ is a coset of a subgroup of $G$ (provided $k\ge 2$).  We prove the analogous result for $k$-sums.  

We require the following definitions.  An \emph{elementary $2$-subgroup} is a finite subgroup of $G$ in which all non-identity elements have order $2$ (equivalently, a subgroup isomorphic to $\ZZ_2^d$, for some $d\in \N$).  We call an additive set $A\ssq G$ a \emph{$2$-coset} if it is a coset of an elementary $2$-subgroup and an \emph{almost $2$-coset} if $A$ may be obtained from a $2$-coset by removing a single element.\vspace{0.1cm}

\begin{theorem}\label{main} Let $A\ssq G$ be an additive set and let $2\le k\le |A|-2$.  Then 
\[ |k\wedge A|\ge |A|\]
unless $k\in \{2,|A|-2\}$ and $A$ is a $2$-coset.  Furthermore \[ |k\wedge A|>|A|\]
unless $A$ is a coset of a subgroup of $G$ or $k\in \{2, |A|-2\}$ and 
\begin{itemize}
\item[(i)] $A$ is an almost $2$-coset, or
\item[(ii)] $|A|=4$ and $A$ is the union of two cosets of a subgroup of order $2$.
\end{itemize} 
\end{theorem}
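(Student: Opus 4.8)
We write $n=|A|$, $\sigma(A)=\sum_{a\in A}a$, and we argue by induction on $k$.

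\emph{Reduction.} Complementation inside $A$ is a bijection between $k$‑subsets and $(n-k)$‑subsets of $A$ under which $\sum_{b\in B}b=\sigma(A)-\sum_{b\in A\setminus B}b$; hence $k\wedge A=\sigma(A)-\bigl((n-k)\wedge A\bigr)$ and in particular $|k\wedge A|=|(n-k)\wedge A|$. Since the hypothesis $2\le k\le n-2$ and every exceptional configuration listed in the theorem is invariant under $k\mapsto n-k$, we may assume $2\le k\le n/2$. This forces $n\ge6$ whenever $k\ge3$, and makes $k=2$ the only value in the range with $k\in\{2,n-2\}$ (the case $k=n-2$ surviving only together with $n=4$, i.e.\ again $k=2$). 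So for $k\ge3$ there is nothing to prove beyond $|k\wedge A|\ge n$, with equality only for cosets of subgroups.

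\emph{The case $k=2$.} On the complete graph with vertex set $A$, colour the edge $\{a,b\}$ by the group element $a+b$. As $a+b=a+c$ implies $b=c$, this is a proper edge‑colouring, and the set of colours used is exactly $2\wedge A$. The chromatic index of $K_n$ is $n-1$ for even $n$ and $n$ for odd $n$; moreover, a simple incidence count shows that a colouring with $n-1$ colours ($n$ even) has every colour class a perfect matching, while a colouring with $n$ colours has total deficiency $n/2$ when $n$ is even and has every colour class a maximum (near‑perfect) matching when $n$ is odd. Thus $|2\wedge A|\ge n-1$, with $|2\wedge A|=n-1$ possible only for $n$ even; and in that extremal case every colour $c\in 2\wedge A$ gives a perfect matching, i.e.\ $x\mapsto c-x$ is a fixed‑point‑free involution of $A$. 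Letting $c$ vary over $2\wedge A$ and using $n\ge4$ to supply a third element for comparison, one gets $A-A\subseteq H:=\{g\in G:g+A=A\}$, so $A$ is an $H$‑coset, while the absence of fixed points forces $2H=\{0\}$, i.e.\ $H\cong\ZZ_2^d$. Hence $|2\wedge A|=n-1$ precisely when $A$ is a $2$‑coset, and $|2\wedge A|\ge n$ otherwise. The refined statement is obtained by pushing exactly this analysis one unit of slack further: assuming $|2\wedge A|=n$, the deficiency bound still leaves most colour classes rigid matchings, and a parity‑sensitive case analysis of precisely which vertices the remaining classes fail to cover pins $A$ down to a coset of a subgroup, an almost $2$‑coset, or (when $n=4$) the union of two cosets of an order‑$2$ subgroup.

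\emph{The case $k\ge3$.} Here $n\ge6$. Every $k$‑subset of $A$ omits some element, so
\[ k\wedge A=\bigcup_{a\in A}\Bigl(a+(k-1)\wedge(A\setminus\{a\})\Bigr)\ \supseteq\ \bigcup_{a\in A}k\wedge(A\setminus\{a\}). \]
By the inductive hypothesis applied to the $(n-1)$‑element sets $A\setminus\{a\}$ (legitimate since $2\le k-1$ and $k\le n-2$; for the boundary value $k=3$ one feeds in the $k=2$ classification for the parameter $k-1=2$), each $(k-1)\wedge(A\setminus\{a\})$ and each $k\wedge(A\setminus\{a\})$ has at least $n-1$ elements, with equality only if $A\setminus\{a\}$ is a coset of a subgroup of order $n-1$. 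The elementary lemma that, for $n\ge4$, \emph{at most one} $a\in A$ can have $A\setminus\{a\}$ of that form — two such cosets share the $n-2\ge2$ points of $A\setminus\{a,b\}$, so their non‑empty intersection would be a coset of a subgroup of order dividing $n-1$ yet at least $n-2$, forcing the two cosets to coincide, which is absurd — then shows some term on the right is already $\ge n$, so $|k\wedge A|\ge n$. For the equality clause: if $A=a_0+H$ is a coset, then $k\wedge A\ssq ka_0+H$ has at most $|H|=n$ elements, hence exactly $n$; conversely $|k\wedge A|=n$ forces every $A\setminus\{a\}$ into the extremal class above, and the same intersection lemma (with a little more care in the residual cases) then forces $A$ itself to be a coset of a subgroup of $G$.

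\emph{The main obstacle.} After the reduction, the case $k\ge3$ is essentially bookkeeping around the coset‑intersection lemma. The genuine difficulty is the classification of near‑optimal proper edge‑colourings of $K_n$ used for $k=2$: one must separate the parities of $n$, track precisely which vertex each imperfect colour class misses, and translate that combinatorial data back into the group‑theoretic statement that accounts for the stabiliser $H$ together with the residual non‑matchings, yielding exactly configurations (i) and (ii).
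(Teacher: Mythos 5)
Your symmetry reduction and your argument for the \emph{inequality} when $k\ge 3$ are sound, and the latter is a genuinely different route from the paper's: instead of extracting a half-sized subset $B$ with $|2\wedge B|\ge |A|$ by a probabilistic argument on proper edge-colourings (Propositions~\ref{prop:ttk} and~\ref{graphprop}) and translating by a single $(k-2)$-sum, you write $k\wedge A\supseteq\bigcup_{a\in A}k\wedge(A\setminus\{a\})$, apply the statement to the $(n-1)$-point sets, and use the (correct) observation that at most one $A\setminus\{a\}$ can be a coset of a subgroup of order $n-1$. Note, however, what this costs: to get $|k\wedge(A\setminus\{a\})|\ge n$ for even one $a$ you must invoke the \emph{strict} inequality of the ``furthermore'' clause at size $n-1$, so your induction has to carry the full equality characterization at every level --- and that is precisely where the proposal breaks down.

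There are two genuine gaps. First, for $k=2$ the hard content of the theorem is the classification of sets with $|2\wedge A|=|A|$ exactly (the paper's Lemma~\ref{iff}, which shows that for $|A|\ge 5$ and $A$ not an almost $2$-coset, $|2\wedge A|=|A|$ forces $A+A=2\wedge A$ and hence $|A+A|=|A|$, together with Lemma~\ref{four} for $|A|=4$). Your proposal replaces this with the sentence that ``a parity-sensitive case analysis \dots pins $A$ down''; that is an assertion, not a proof. The deficiency count that rigidifies the case $|2\wedge A|=n-1$ (every colour class a perfect matching) does not extend ``one unit of slack further'' in any routine way: a proper colouring of $K_n$ with exactly $n$ colours has essentially no forced structure (consider $A$ an arithmetic progression in $\ZZ$), and turning $|2\wedge A|=n$ into ``$A$ is a coset, an almost $2$-coset, or the $n=4$ exception'' is where the real work lies. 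Second, for $k\ge 3$ your equality analysis begins with the claim that $|k\wedge A|=n$ ``forces every $A\setminus\{a\}$ into the extremal class above''; this is false, and indeed contradicts your own intersection lemma, which says at most one $A\setminus\{a\}$ can be a coset. What equality actually yields is $k\wedge(A\setminus\{a\})=k\wedge A$ for all but at most one $a$, and passing from that to ``$A$ is a coset'' requires a further argument --- the paper needs Lemma~\ref{subgroup} applied to the sets $S=2\wedge B$ and $C=A\setminus B$, then Lemma~\ref{lem:aot} and a case analysis over $|H|\in\{|A|,|A|/2,|A|/3\}$. As written, the second half of the theorem (the characterization of equality) is not established, and since your induction for the first half depends on it, the inequality for $k\ge3$ is not fully established either.
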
\vspace{0.1cm}

\begin{remark} The first assertion of Theorem~\ref{main} was first proved by Mann and Olson~\cite{MannOlson67} in the case that $G=\ZZ_p$ ($p$ prime), and by Diderrich~\cite{Diderrich73} in the case that $G$ has no $2$-torsion.  Wang~\cite{Wang08} rediscovered the result of Diderrich and proved that equality occurs if and only if $A$ is a coset of a subgroup of $G$.\end{remark}

\begin{remark} Diderrich's proof is inductive and for this reason he actually proved a slightly more general result stating that $|\sigma(k\wedge A)|\ge |\sigma(A)|$ for any canonical homomorphism $\sigma:G\to G/H$.  Alternatively, this result may be viewed as a sequence version of the inequality $|k\wedge A|\ge |A|$.  In Section~\ref{sec:seqversion}, we sketch the proof of a sequence version of Theorem~\ref{main} which extends Diderrich's result. \end{remark}

Our proof of Theorem~\ref{main} proceeds in two steps.  In the first, we focus on $2$-sums and prove detailed results concerning this case.  In the second, we relate the $k$-sums of a set $A$ to the $2$-sums of a subset $B\ssq A$ using the observation that
\begin{equation} a_1+ \dots + a_{k-2} +\, \, 2\wedge B \, \, \ssq \,\, k\wedge A\, \label{ttk}\end{equation}
for any distinct elements $a_1,\dots ,a_{k-2}$ of $A\setminus B$.  Thus, our proof relies on proving bounds on $|2\wedge B|$ for subsets $B\ssq A$.\vspace{0.1cm}

\begin{proposition}\label{prop:ttk} Let $A\ssq G$ be an additive set of cardinality $|A|\ge 5$.
\begin{itemize}
\item[(i)] If $|2\wedge A|\ge |A|$ then there is a subset $B\ssq A$ with $|B|=\lfloor (|A|+3)  /2\rfloor$ such that $|2\wedge B|\ge |A|$.
\item[(ii)] If $|2\wedge A|<|A|$ then there is a subset $B\ssq A$ with $|B|= |A|/2+1$ such that $|2\wedge B|=|A|-1$.
\end{itemize}
\end{proposition}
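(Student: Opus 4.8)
My plan is to recast $2$-sums as proper edge-colourings of complete graphs. For a finite $B\ssq G$, colour the edge $\{x,y\}$ of $K_B$ by the group element $x+y$; since two edges sharing a vertex receive distinct colours, this is a \emph{proper} edge-colouring whose number of colours is exactly $|2\wedge B|$. I will use two elementary facts. (a) Each colour class is a matching, so it has at most $\lfloor|B|/2\rfloor$ edges; in particular $|2\wedge B|\ge|B|-1$ always, and $|2\wedge B|$ can only be close to this bound if the colouring is close to a $1$-factorization. (b) Deleting a vertex $v$ kills a colour only if that colour appears on a single edge, which must then be incident to $v$; hence the colours lost at $v$ form a matching at $v$, and writing $r$ for the number of colours of multiplicity one, the total number of lost-colour incidences over all vertices equals $2r$. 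Fact (b), combined with the bound on $r$ forced by (a) (the non-rare colour classes are matchings with at least two edges), is the elementary property of proper edge-colourings on which the argument hinges.

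\medskip
\noindent\textbf{Part (ii).} Assume $|2\wedge A|<|A|$ and $|A|\ge5$. By our structural description of additive sets with few $2$-sums, $A$ is then a coset of an elementary $2$-subgroup, say $A=g+H$ with $H\cong\ZZ_2^d$ and $d\ge3$, and $|2\wedge A|=|A|-1$. Choose a subgroup $H_0<H$ of index $2$ and an element $h_0\in H\setminus H_0$, and set $B:=(g+H_0)\cup\{g+h_0\}$, so $|B|=|H|/2+1=|A|/2+1$. Since $d\ge3$, every nonzero element of $H_0$ is a sum of two distinct elements of $H_0$, and one computes
\[2\wedge B\;=\;\bigl(2g+(H_0\setminus\{0\})\bigr)\;\cup\;\bigl(2g+h_0+H_0\bigr),\]
a disjoint union (the translates $2g+H_0$ and $2g+h_0+H_0$ differ because $h_0\notin H_0$) of a set of size $|H_0|-1$ and one of size $|H_0|$. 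Hence $|2\wedge B|=2|H_0|-1=|A|-1$, which is exactly what part (ii) demands.

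\medskip
\noindent\textbf{Part (i).} Assume $|2\wedge A|\ge|A|$ and put $s:=\lfloor(|A|+3)/2\rfloor$; note that $2s-2\ge|A|$, so it suffices to produce $B\ssq A$ with $|B|=s$ and $|2\wedge B|\ge2s-2$. When $|2\wedge A|$ is comfortably above $|A|$ this is routine: delete vertices of $A$ one at a time, each time using fact (b) together with the bound on $r$ to find a vertex whose removal keeps $|2\wedge\,\cdot\,|\ge|A|$, stopping at size $s$. Alternatively, by our structural results the additive sets $B$ with $|2\wedge B|<2|B|-2$ form a short list of exceptional families -- arithmetic progressions and (near-)cosets of small elementary subgroups -- so one only needs to exhibit an $s$-subset of $A$ avoiding all of them, which is possible unless $A$ itself is confined to such a family. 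The remaining, highly structured $A$ (an almost $2$-coset, a set living inside a small $\ZZ_2^d$- or $\ZZ_3^d$-coset, and finitely many small cases) are handled by the explicit recipe of part (ii): take a structured ``half'' of the ambient coset, adjoin one or two further points of $A$, and check that $2\wedge B$ is then compelled to fill essentially two translates of a subgroup, so that $|2\wedge B|\ge|A|$.

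\medskip
\noindent The main obstacle is precisely the near-extremal regime of part (i), where $|2\wedge A|$ sits just above $|A|$. A plain averaging in fact (b) only yields a deletion losing roughly $2r/|B|$ colours, and when the slack $|2\wedge A|-|A|$ is tiny one cannot afford to iterate this the roughly $|A|/2$ times needed. Circumventing this forces one to feed in the classification of additive sets with few $2$-sums -- so that near-extremal $A$ are recognised as (near-)cosets of $\ZZ_2^d$ or similarly small groups -- and to replace the greedy deletion by the explicit constructions above. Arranging the case split so that the combinatorial deletion and the explicit constructions together cover every $A$ with $|A|\ge5$, and dispatching the small values of $|A|$ by direct verification, is where most of the work lies.
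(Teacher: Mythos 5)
Your part (ii) is correct and self-contained: once you know (from the characterization of sets with $|2\wedge A|<|A|$, which is proved independently of this proposition) that $A$ is a $2$-coset $g+H$, the explicit choice $B=(g+H_0)\cup\{g+h_0\}$ does give $|2\wedge B|=2|H_0|-1=|A|-1$. This is a legitimate alternative to the paper's route, which instead observes that when only $|A|-1$ colours are used every colour class is a perfect matching, so that \emph{every} subset of size $|A|/2+1$ works.

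Part (i), however, is not proved. You correctly diagnose that the greedy vertex-deletion argument collapses in the near-extremal regime where $|2\wedge A|$ is barely $|A|$ (about $|A|/2$ deletions are needed and each can cost a colour), and your proposed repair is to invoke a classification of the additive sets $B$ with $|2\wedge B|<2|B|-2$. No such classification is established in your argument, and it is not an elementary fact: such a $B$ has doubling below $2$, and describing all such sets in an arbitrary abelian group is a Kneser/Freiman-type structure statement far heavier than anything this proposition should require. Even granting it, you neither show that an $s$-subset of $A$ avoiding the exceptional families exists nor verify the residual ``highly structured'' cases, so the case split you describe is a plan rather than a proof. The paper sidesteps all of this with a single first-moment computation that is insensitive to how close $|2\wedge A|$ is to $|A|$: choose $U\ssq A$ of size $s=\lfloor(|A|+3)/2\rfloor$ uniformly at random. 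A colour class consisting of $a$ disjoint pairs is hit by $U$ with probability at least $2a/|A|$, with equality only when $a=|A|/2$; this follows from the pigeonhole fact that $U$ must cover at least one pair of any perfect matching, combined with a count over the $a$-element subsets of such a matching. Summing over colours, the expected number of colours surviving in $2\wedge U$ is at least $\sum_i 2a_i/|A|=2\binom{|A|}{2}/|A|=|A|-1$, and the inequality is strict as soon as some colour class is neither empty nor a perfect matching --- which must happen when at least $|A|$ colours are used, since $|A|-1$ perfect matchings already exhaust all $\binom{|A|}{2}$ edges. Hence some $U$ of size $s$ satisfies $|2\wedge U|\ge|A|$. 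You should replace the deletion-plus-classification scheme for part (i) with an averaging argument of this kind.
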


The reader will observe that by~\eqref{ttk}, together with Proposition~\ref{prop:ttk}, the implication
\[ |2\wedge A|\ge |A| \qquad \Rightarrow \qquad |k\wedge A|\ge |A|\]
holds for all $k\in \{2,\dots, \lfloor |A|/2\rfloor\}$.  Taking into account the symmetry $|k\wedge A|=|(|A|-k)\wedge A|$, we observe that the implication holds for all $k\in \{2,\dots ,|A|-2\}$.  Except for the special case when $A$ is a $2$-coset, this is precisely our proof of the first part of Theorem~\ref{main} (see Proposition~\ref{first} and its proof).  The proof of the second part of Theorem~\ref{main}, the characterization of the cases where the equality $|k\wedge A|=|A|$ holds, again uses Proposition~\ref{prop:ttk}, but in this case we consider not only a single $(k-2)$-sum of $A\setminus B$ (as in~\eqref{ttk}) but all of them:
\[(k-2)\! \wedge \!(A\setminus B)\,\,\, +\, \,\, 2\wedge B\,\,\, \ssq \,\,\, k\wedge A \, .\label{ttka}\vspace{0.2cm} \]

The outline of the paper is as follows.  We complete the introduction by stating the graph-theoretic analogue of Proposition~\ref{prop:ttk}.  In Section~\ref{sec:two} we prove results concerning $|2\wedge A|$, the number of $2$-sums of $A$.  In Section~\ref{sec:ttk} we prove Proposition~\ref{prop:ttk}, and in Section~\ref{sec:mainproof} we bring together the two strands to prove Theorem~\ref{main}. In Section \ref{sec:seqversion}, we state and prove a sequence version of Theorem~\ref{main}. Finally, in Section~\ref{ConcRem}, we mention some open problems and make some concluding remarks.

We remark that Proposition~\ref{prop:ttk} will be deduced from the following graph-theoretic statement which we state here in case it may be of independent interest.  We denote by $K_n$ the complete graph on $n$ vertices.  The vertex set and edge set of $K_n$ are denoted by $V(K_n)$ and $E(K_n)$ respectively.  Given a graph $G$ and a vertex subset $U$ we denote by $G[U]$ the subgraph of $G$ induced by $U$.  Finally, we recall that a proper edge-colouring of a graph is one in which any two edges that share an endpoint receive different colours.  \vspace{0.1cm}

\begin{proposition}\label{graphprop} Let $n\ge 5$ be an integer and let $c:E(K_n)\to \N$ be a proper edge-colouring of the complete graph on $n$ vertices. 
\begin{itemize}
\item[(i)] If at least $n$ colours are used by $c$ then there is a subset $U\ssq V(K_n)$ with $|U|=\lfloor (n+3)  /2\rfloor$ and such that $c$ uses at least $n$ colours on the edges of $K_n[U]$.
\item[(ii)] If $c$ uses $n-1$ colours then there is a subset $U\ssq V(K_n)$ with $|U|= n/2+1$ and such that $c$ uses $n-1$ colours on the edges of $K_n[U]$.
\end{itemize}
\end{proposition}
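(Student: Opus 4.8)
The plan is to build the desired vertex subset $U$ by a greedy/incremental argument. Fix the proper edge-colouring $c$, and let $m$ denote the number of colours used ($m\ge n$ in case (i), $m=n-1$ in case (ii)). First I would observe that we only ever need to "witness" a bounded set of colours: choose once and for all a set $\mathcal{C}$ of target colours with $|\mathcal{C}|=n$ (in case (i)) or $|\mathcal{C}|=n-1$ (in case (ii)), and for each colour in $\mathcal{C}$ fix one representative edge of that colour. This gives a set $R$ of at most $n$ (resp.\ $n-1$) edges, no two of which share a colour — but they may of course share vertices. The task reduces to finding a vertex set $U$ of the prescribed size that contains both endpoints of ``enough'' of these representative edges to cover all of $\mathcal{C}$. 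Since each edge has only two endpoints, the vertices of $R$ number at most $2n$, which is already more than the budget, so a naive ``take all endpoints'' fails and some care is needed.

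The key idea is that a proper edge-colouring is very economical in vertices: the $t$ representative edges span at most $2t$ vertices, but because the colouring is proper we can be much smarter. Here is the core step. Process the representative edges one at a time; maintain a growing vertex set $U$ and a set $\mathcal{C}'\ssq\mathcal{C}$ of colours already witnessed inside $U$. At each step, if the current edge $e=\{x,y\}$ of colour $\gamma\notin\mathcal{C}'$ has both endpoints already in $U$, we witness $\gamma$ for free. Otherwise $e$ contributes one or two new vertices. The crucial saving: whenever we are forced to add a vertex $v$ to $U$, all edges from $v$ to the vertices already in $U$ receive distinct colours (properness!), so adding one vertex can simultaneously witness many colours at once — each vertex added to a set of size $j$ potentially brings $j$ new witnessed colours. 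A counting argument then shows that to witness $n$ (resp.\ $n-1$) colours we need roughly $\lfloor(n+3)/2\rfloor$ (resp.\ $n/2+1$) vertices: if $|U|=u$ then the edges inside $K_n[U]$ number $\binom{u}{2}$, and since $c$ is proper these $\binom{u}{2}$ edges at a single vertex split into colour classes of controlled size; one optimizes the relationship between $u$ and the number of distinct colours forced to appear. The precise extremal bound $\binom{u}{2}\ge$ (number of witnessed colours) is too weak by itself, so I expect the real argument to track, for the partially built $U$, how many of the target colours are \emph{missing} and to show each added vertex kills at least two missing colours on average until we run out of budget.

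For the delicate part — hitting the exact cardinalities $\lfloor(n+3)/2\rfloor$ and $n/2+1$ rather than something like $n/2 + O(1)$ — I would set up the greedy process to always add, at each step, a vertex $v\notin U$ maximizing the number of \emph{new} target colours appearing on edges from $v$ into the current $U$ (plus possibly one more colour from an edge inside $V(K_n)\setminus U$). Starting from a well-chosen pair of vertices (an edge realizing one target colour, so $|U|=2$ and one colour witnessed), one shows inductively that after adding the $i$-th extra vertex, at least $2i-1+\text{(something)}$ target colours are witnessed, because when $|U|=i+1$ the $i$ edges from the new vertex into $U$ have $i$ distinct colours. Summing, $|U|=s$ witnesses on the order of $1+1+2+\dots$, i.e.\ roughly $\binom{s}{2}$ colours if they were all new — but of course they need not all be new, and reconciling this overcount is exactly where case (i) and case (ii) diverge and where the ``$+3$'' versus ``$+1$'' and the floor functions come from. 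I expect this bookkeeping — controlling repeated colours among the edges into $U$ while staying within the stated size — to be the main obstacle; the rest is a clean translation (via Proposition~\ref{prop:ttk}'s dictionary: vertices $\leftrightarrow$ elements of $A$, the colour of edge $\{a,b\}$ $\leftrightarrow$ the value $a+b$, noting properness holds because $a+b=a+b'$ forces $b=b'$) of a statement about $2\wedge A$ into one about proper edge-colourings.
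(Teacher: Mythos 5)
Your proposal does not close the argument: the entire weight of the proof rests on the claim that the greedy process witnesses at least two new target colours per added vertex ``on average,'' and you explicitly leave this bookkeeping open (``I expect this \dots to be the main obstacle''). That step is not a routine verification --- it is the whole theorem. Worse, your opening reduction to a single representative edge per colour discards exactly the leverage that makes the statement true: a colour appearing on $a$ edges is much easier to capture inside $U$ than a colour appearing on one edge, and any argument that fixes one edge per colour is trying to cover $n$ prescribed edges (up to $2n$ endpoints) with only about $n/2$ vertices, which generically fails. To see how little slack there is, take $n=6$ in part (i): you must find a \emph{rainbow} $K_4$ (six edges, six distinct colours) inside a properly coloured $K_6$ using at least six colours. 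A local greedy rule ``add the vertex bringing the most new colours'' has no room for a single repeated colour here, and nothing in your plan rules one out.

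The paper avoids all of this with a one-line first-moment computation: choose $U$ uniformly among the $\lfloor (n+3)/2\rfloor$-subsets, show that a colour class of size $a$ lies in $K_n[U]$ with probability $p(a,n)\ge 2a/n$ (with equality only when $a=n/2$, i.e.\ a perfect matching), and sum over colours to get expected value $\sum_i p(a_i,n)\ge \sum_i 2a_i/n = n-1$, which is strict --- hence at least $n$ --- as soon as some colour class is not a perfect matching, which must happen when $n$ colours are used. Part (ii) is then immediate without any selection at all: if only $n-1$ colours are used, every colour class is a perfect matching, and \emph{every} $(n/2+1)$-subset contains an edge of every perfect matching by pigeonhole. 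Your translation between $2\wedge A$ and proper colourings is correct, but the combinatorial core of your plan is missing and, in the tight cases, unlikely to be repairable along greedy lines.
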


The proof of Proposition~\ref{graphprop}, given in Section~\ref{sec:ttk}, is a straightforward application of the probabilistic method, namely the first moment method.

\section{The number of $2$-sums}\label{sec:two}

In this section we prove some lemmas on the number of $2$-sums of an additive set $A$.  Before doing so, we recall the following well known fact (see~\cite[Proposition $2.2$]{TV} for example).\vspace{0.1cm}

\begin{lemma}\label{subgroup} Let $A,B\ssq G$ be additive sets.  Then $|A+B|=|A|$ if and only if $A$ is a union of cosets of a subgroup $H$ of $G$ and $B$ is contained in a coset of $H$.  In particular, $|A+A|=|A|$ if and only if $A$ is a coset of a subgroup of $G$. \end{lemma}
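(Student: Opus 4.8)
The statement to prove is Lemma~\ref{subgroup}, the "Kneser-type" characterisation of when $|A+B|=|A|$. I need both directions. The backward direction is the easy one: if $A$ is a union of cosets of a subgroup $H$ and $B\subseteq g+H$ for some $g$, then $A+B = A+g$ (since $A+H=A$ when $A$ is $H$-invariant), hence $|A+B|=|A|$. So the work is all in the forward direction, and the plan is to extract the relevant stabiliser subgroup from the hypothesis $|A+B|=|A|$.

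**The forward direction.** Assume $|A+B|=|A|$. First I would fix any $b_0\in B$; then $A+b_0\subseteq A+B$ and $|A+b_0|=|A|=|A+B|$, so $A+B=A+b_0$, i.e. $A+B$ is a translate of $A$. Replacing $A$ by $A':=A+b_0$ and $B$ by $B':=B-b_0$ (which changes neither cardinality nor the truth of the conclusion, since cosets are preserved under translation and $0\in B'$), I may assume $0\in B$ and $A+B=A$. Now I set $H:=\{g\in G: g+A=A\}$, the stabiliser of $A$; this is a finite subgroup of $G$ because $A$ is finite and nonempty (closure under addition and inverses is routine, and $H\subseteq A-a$ for any fixed $a\in A$ gives finiteness). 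I claim $B\subseteq H$: indeed for each $b\in B$ we have $b+A\subseteq A+B=A$, and since $|b+A|=|A|$ this forces $b+A=A$, so $b\in H$. Finally I must show $A$ is a union of cosets of $H$, but that is immediate from the definition of $H$ as the stabiliser: $h+A=A$ for all $h\in H$ means $A+H=A$, so $A$ is $H$-invariant and hence a union of $H$-cosets.

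**Translating back and the "in particular" clause.** Undoing the translation: in the original coordinates, $A$ is a union of cosets of $H$ (translation of a union of cosets is again such a union, for the same $H$), $B-b_0\subseteq H$ means $B\subseteq b_0+H$, a single coset of $H$, as required. For the final assertion, apply the main equivalence with $B=A$: if $|A+A|=|A|$ then $A$ is a union of cosets of some subgroup $H$ and $A$ is contained in a single coset of $H$ — but a union of $H$-cosets lying inside one $H$-coset must equal that coset, so $A$ is a coset of $H$. Conversely if $A=g+H$ then $A+A = 2g+H$ has size $|H|=|A|$.

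**Expected obstacle.** There is essentially no deep obstacle here — the lemma is classical (it is cited as \cite[Proposition~2.2]{TV}) and the argument is the standard stabiliser computation. The only points requiring a little care are (a) justifying that the stabiliser $H$ is genuinely a subgroup and is finite (using finiteness of $A$), and (b) checking that the reduction to the case $0\in B$, $A+B=A$ is legitimate — i.e. that the conclusion is translation-invariant in the appropriate sense. Both are routine, so I would present the proof compactly, spending at most a sentence on each of these verifications.
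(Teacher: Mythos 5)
Your proof is correct. Note that the paper does not actually prove this lemma: it is stated as a well-known fact with a citation to Tao and Vu \cite[Proposition 2.2]{TV}, so there is no internal argument to compare against. Your stabiliser argument (reduce to $0\in B$ and $A+B=A$, take $H$ to be the stabiliser of $A$, observe $B\ssq H$ and that $A$ is $H$-invariant) is the standard proof of this classical statement, and the small points you flag --- finiteness of the stabiliser via $H\ssq A-a$, and translation-invariance of the conclusion --- are exactly the ones that need a word of justification.
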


We now establish the key lemmas on the set of $2$-sums of $A$. To do so, the following notation will be used.  
We say that a $2$-sum $g$ of an additive set $A$ is \emph{represented} at an element $a\in A$ if there is an element $b\in A\setminus \{a\}$ such that $a+b=g$.\vspace{0.1cm}

\begin{lemma}\label{lem:two} Let $A\ssq G$ be an additive set of cardinality at least $3$.  Then $|2\wedge A|<|A|$ if and only if $A$ is a $2$-coset.\end{lemma}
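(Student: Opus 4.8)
$|2\wedge A| < |A|$ iff $A$ is a 2-coset, for $|A| \geq 3$.

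Let me think about this carefully.The ``if'' direction is a direct computation. Suppose $A = g + H$ with $H$ an elementary $2$-subgroup of $G$. For distinct $a = g + h_1$ and $b = g + h_2$ in $A$ we have $a + b = 2g + (h_1 + h_2)$; since every element of $H$ has order dividing $2$, we have $h_1 + h_2 = 0$ exactly when $h_1 = h_2$, while every nonzero $h\in H$ equals $h + 0$. Hence $h_1 + h_2$ runs precisely over $H\setminus\{0\}$ as $\{h_1,h_2\}$ runs over the $2$-subsets of $H$, so $2\wedge A = 2g + (H\setminus\{0\})$ and $|2\wedge A| = |A| - 1 < |A|$.

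For the converse, write $n := |A|\ge 3$ and, following the hint in the introduction, regard $A$ as the vertex set of $K_n$ and colour the edge $\{a,b\}$ by the group element $a+b$. Two edges at a common vertex $a$ get colours $a+b$, $a+b'$, equal only if $b=b'$, so this is a proper edge-colouring, and its set of colours is exactly $2\wedge A$. Each vertex meets $n-1$ edges of pairwise distinct colours, so at least $n-1$ colours appear; with the assumption $|2\wedge A|<n$ this gives $|2\wedge A| = n-1$. Now the $\binom n2$ edges of $K_n$ split into $n-1$ colour classes, each a matching and so of size at most $\lfloor n/2\rfloor$; from $(n-1)\lfloor n/2\rfloor \ge \binom n2 = (n-1)\cdot n/2$ we deduce that $n$ is even and every colour class is a \emph{perfect} matching. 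Back in the group: for every $g\in 2\wedge A$ and every $a\in A$ there is some $b\in A\setminus\{a\}$ with $a+b=g$, that is, $g-a\in A$; since $|g-A|=|A|$ this forces $g-A=A$.

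Next I would bring in the stabiliser $K := \{x\in G : x + A = A\}$, a finite subgroup of $G$ with $a+K\ssq A$ for each $a\in A$; thus $|K|\le n$, and $A$ is a union of cosets of $K$, so $|K|$ divides $n$. For $g,h\in 2\wedge A$, applying $g-A=A$ and then $h-A=A$ shows that $a+(h-g)\in A$ for all $a\in A$, whence $h-g\in K$; therefore $2\wedge A$ is contained in a single coset of $K$, so $|K|\ge|2\wedge A| = n-1$. Since $|K|$ divides $n$, lies in $\{n-1,n\}$, and $n\ge 3$, we conclude $|K|=n$, so $A = a+K$ is a single coset of $K$.

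Finally, write $2\wedge A = 2a + S$ with $S := \{k_1+k_2 : k_1,k_2\in K,\ k_1\ne k_2\}\ssq K$, so that $|S| = n-1 = |K|-1$ and $S$ omits exactly one element of $K$. An element $x\in K$ fails to lie in $S$ only if $2k = x$ for \emph{every} $k\in K$, which makes the doubling map on $K$ constant; taking differences then gives $2y=0$ for all $y\in K$ (and $x=0$). Hence $K$ is an elementary $2$-subgroup and $A = a+K$ is a $2$-coset, as required. The only steps that need a little care are the incidence count (extracting both ``$n$ even'' and ``perfect matchings'' from a single inequality) and the divisibility squeeze forcing $|K|=n$; the rest is routine.
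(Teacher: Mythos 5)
Your proof is correct. The ``if'' direction matches the paper's (a direct computation giving $|2\wedge A|=|A|-1$), and your opening move in the converse --- forcing $|2\wedge A|=n-1$ and deducing that every $2$-sum is represented at every element of $A$ --- is the same key observation the paper makes, though you reach it through the edge-colouring/matching count (which also hands you the extra fact that $n$ is even, something the paper never needs). From that point on your route genuinely diverges. The paper stays local: after translating so that $0\in A$, it fixes $y$ and an arbitrary $z\in A\setminus\{0,y\}$, uses representation at $0$ and at $y$ to place $y+z$ and $z-y$ in $A$, and observes that if these were distinct then $2z=(y+z)+(z-y)$ would be a $2$-sum not represented at $z$; hence $2y=0$ for all $y\in A$, and Lemma~\ref{subgroup} finishes. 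You instead argue globally via the stabiliser $K=\{x:x+A=A\}$: from $g-A=A$ for every $g\in 2\wedge A$ you place $2\wedge A$ inside a single coset of $K$, so $|K|\ge n-1$, and the divisibility squeeze ($|K|\mid n$, $n\ge3$) forces $A$ to be a full coset of $K$; the final count $|S|=|K|-1$ then pins down $K$ as an elementary $2$-group. Both arguments are sound; the paper's is shorter and more elementary (three elements and one cancellation), while yours exposes more structure along the way (evenness of $n$, the stabiliser, and the fact that $2\wedge A$ sits in one coset of it) at the cost of a couple of extra counting steps. All the delicate points you flag --- extracting both conclusions from the single matching inequality, and ruling out $|K|=n-1$ --- are handled correctly.
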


\begin{proof}  In the case that $A$ is a $2$-coset then it is easily observed that $|2\wedge A|=|A|-1<|A|$.\vspace{0.2cm}

Now, suppose that $A$ is an additive set for which $|2\wedge A|<|A|$.  Since $|2\wedge A|$ is invariant under translation we may assume also that $0\in A$.  We shall prove that $2y=0$ for every $y\in A$.  Observe that this is sufficient to complete the proof of the lemma.  Indeed, by Lemma~\ref{subgroup} it suffices to prove that $A$ is contained in an elementary $2$-subgroup of $G$ and that $|A+A|=|A|$, and both these facts follow easily once we prove that $2y=0$ for every $y\in A$ (for the latter, observe that $A+A=(2\wedge A) \, \cup\, \{2y:y\in A\}$).

In proving that $2y=0$ for every $y\in A$ we shall use the following observation: since $|A|-1$ distinct $2$-sums are represented at each element of $A$, the inequality $|2\wedge A|<|A|$ may occur only if $|2\wedge A|=|A|-1$.  In this case every $2$-sum is represented at every element of $A$.  

Now, let $y\in A$ be fixed, we prove that $2y=0$.  We argue as follows. Let $z$ be an arbitrary element of $A\setminus \{0,y\}$. Then we may deduce from the above observation that the sum $y+z$ is represented at $0$ and the sum $0+z$ is represented at $y$.  It follows that $$y+z,\, z-y\, \in\, A\, .$$  If these elements are distinct, then $2z=(y+z)+(z-y)$ would be a $2$-sum not represented at $z$, a contradiction.  It follows that $y+z=z-y$, and so $2y=0$, as required.\end{proof}

We now characterize subsets of $2$-cosets for which the inequality $|2\wedge A|\le |A|$ holds.  Recall that an almost $2$-coset is an additive set $A$ that may be obtained from a $2$-coset by removing a single element.\vspace{0.1cm}

\begin{lemma}\label{lem:oratmost} Let $A\ssq G$ be a subset of a $2$-coset. Then $|2\wedge A|>|A|$ unless $A$ is either a $2$-coset or an almost $2$-coset.\end{lemma}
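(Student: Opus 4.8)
The plan is to reduce the statement to a question about the ordinary sumset $A+A$, using the special structure of elementary $2$-groups, and then to pin down the extremal configuration by a short direct argument.

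First I would translate so that $0\in A$ and $A$ is contained in an elementary $2$-subgroup $H$ of $G$; this is possible since $A$ lies in a coset of such a subgroup, and both $|2\wedge A|$ and the properties ``$2$-coset'' and ``almost $2$-coset'' are translation invariant. Inside $H$ one has $a+b=0$ (with $a,b\in A$) only when $a=b$, so $0\notin 2\wedge A$; on the other hand $A+A=(2\wedge A)\cup\{2a:a\in A\}=(2\wedge A)\cup\{0\}$, whence $|A+A|=|2\wedge A|+1$. Since $|A+A|\ge|A|$ always, it therefore suffices to show that $|A+A|\le|A|+1$ forces $A$ to be a $2$-coset or an almost $2$-coset.

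If $|A+A|=|A|$ then Lemma~\ref{subgroup} says $A$ is a coset of a subgroup of $G$, and being contained in $H$ it is then a $2$-coset. So assume $|A+A|=|A|+1$. As $0\in A$ we have $A=0+A\ssq A+A$, and since this inclusion misses exactly one element we may write $A+A=A\cup\{z\}$ with $z\notin A$; note $z\neq 0$ since $0\in A$. The heart of the argument — and the step I expect to be the main obstacle — is to prove that $z+a\in A$ for every $a\in A\setminus\{0\}$. Suppose not, say $z+a_0\notin A$ for some $a_0\in A\setminus\{0\}$. Then $z+a_0\neq z$, so $z+a_0\notin A\cup\{z\}=A+A$; and since $a_0$ has order $2$, having $z\in a_0+A$ would give $z+a_0\in A$, so $z\notin a_0+A$. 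But $a_0+A\ssq A+A=A\cup\{z\}$ has size $|A|$, forcing $a_0+A=A$; hence $A$, and therefore also $A+A=(a_0+A)+A$, is invariant under translation by $a_0$. Then $a_0+z\in A+A=A\cup\{z\}$, and $a_0+z\neq z$ gives $a_0+z\in A$, contradicting the choice of $a_0$.

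It remains to conclude. Writing $K=A\cup\{z\}=A+A$, the previous step shows $K$ is closed under addition: sums of two elements of $A$ lie in $A+A=K$; $z+a\in A\ssq K$ for $a\in A\setminus\{0\}$; $z+0=z\in K$; and $z+z=0\in K$. A nonempty finite subset of a group closed under addition is a subgroup, so $K$ is a subgroup of $H$, hence a $2$-coset, and $A=K\setminus\{z\}$ is an almost $2$-coset, as required.
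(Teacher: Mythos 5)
Your proof is correct, and it takes a genuinely different route from the paper. The paper's argument (after the same normalization $0\in A\ssq H$) chooses a minimal subset $\{x_1,\dots,x_d\}$ of $A$ generating $H$ and uses minimal index sets $I\neq J$ with $\sum_{i\in I}x_i,\sum_{j\in J}x_j\notin A$ to exhibit two elements of $H\setminus A$ that are nevertheless $2$-sums of $A$; together with $A\setminus\{0\}\ssq 2\wedge A$ this gives $|2\wedge A|\ge |A|+1$ directly whenever $|H\setminus A|\ge 2$. You instead observe that in an elementary $2$-group all doubles collapse to $0$, so $|A+A|=|2\wedge A|+1$, which converts the problem into a small-doubling statement: $|A+A|\le |A|+1$. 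The case $|A+A|=|A|$ is handled by Lemma~\ref{subgroup}, and in the case $|A+A|=|A|+1$ your stabilizer-style argument (using $2a_0=0$ to show $a_0+A=A$ would follow from any failure of $z+a_0\in A$) shows that $A+A=A\cup\{z\}$ is closed under addition, hence a subgroup, identifying $A$ as an almost $2$-coset. Your version is slightly longer but arguably more robust: it avoids the choice of a basis of $H$ and the somewhat delicate minimality bookkeeping for $I$ and $J$, and it yields the extremal structure ($A\cup\{z\}$ is a subgroup) as an explicit conclusion rather than just the counting bound. The paper's version is shorter and more constructive, producing the two extra $2$-sums by hand. Both are complete proofs of the lemma.
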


\begin{proof}  Without loss of generality we may assume that $0\in A$ so that $A$ is contained in some elementary $2$-subgroup $H$ of $G$.  We may further assume that $H$ is minimal such that $A\ssq H$, that $\{x_1,\dots ,x_d\}$ is a minimal subset of $A$ such that $H=\langle x_1,\dots ,x_d\rangle$, and that $|H\setminus A|\ge 2$ (as there is nothing to prove if $A$ is a $2$-coset or almost $2$-coset).  Let $I\ssq \{1,\dots,d\}$ be minimal such that $\sum_{i \in I} x_i\not \in A$, and let $J\ssq \{1,\dots,d\}$ be minimal such that $J\neq I$ and $\sum_{j \in J} x_j\not \in A$.  By minimality both $\sum_{i \in I} x_i$ and $\sum_{j \in J} x_j$ may be expressed as sums of two distinct elements of $A$.  Thus, 
$$2\wedge A\, \supseteq\, (A\setminus \{0\})\, \cup\, \left\{\sum_{i \in I} x_i, \sum_{j \in J} x_j\right\}\, ,$$ 
and so $|2\wedge A|>|A|$.\end{proof}

We now establish two lemmas that will help us characterize those additive sets $A$ for which $|2\wedge A|=|A|$.\vspace{0.1cm}

\begin{lemma}\label{iff} Let $A\ssq G$ be an additive set of cardinality at least $5$, and suppose that $A$ is not an almost $2$-coset.  Then $$|2\wedge A|\le |A|\quad \Leftrightarrow \quad |A+A|=|A|\, .$$\end{lemma}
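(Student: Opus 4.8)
The plan is to prove both implications, the easy one first. For the direction $|A+A|=|A| \Rightarrow |2\wedge A|\le |A|$, I would simply note that $2\wedge A \ssq A+A$, so $|2\wedge A|\le |A+A|=|A|$. No hypothesis on $A$ is needed here. The substance of the lemma lies in the converse: assuming $|2\wedge A|\le |A|$ and that $A$ is not an almost $2$-coset, we must deduce $|A+A|=|A|$.

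So suppose $|2\wedge A|\le |A|$. By Lemma~\ref{lem:two}, either $|2\wedge A|=|A|$, or $|2\wedge A|<|A|$ and $A$ is a $2$-coset. In the latter case $A$ is in particular a coset of a subgroup, so $|A+A|=|A|$ by Lemma~\ref{subgroup} and we are done; moreover a $2$-coset of size $\ge 5$ is not an almost $2$-coset, so this case is genuinely consistent with our hypotheses. It therefore remains to treat the case $|2\wedge A|=|A|$. Translating, we may assume $0\in A$. As in the proof of Lemma~\ref{lem:two}, at least $|A|-1$ distinct $2$-sums are represented at each element of $A$; since $|2\wedge A|=|A|$, at each $a\in A$ either \emph{all} $2$-sums are represented, or exactly one $2$-sum, call it $g_a$, fails to be represented at $a$. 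I would like to push this rigidity to conclude that every element of $A$ has order dividing $2$ and that $A$ is closed under addition, i.e. $A=A+A$, after which $|A+A|=|A|$ is immediate.

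The engine, exactly as in Lemma~\ref{lem:two}, is: if $a,a'\in A$ and the $2$-sum $a+a'$ happens to be represented at a third element $b\in A$, then there is $b'\in A$ with $b+b'=a+a'$, hence $b'=a+a'-b\in A$; applying this twice, $a+a'-b$ and $b$ are both in $A$, and their sum is $a+a'$, which therefore cannot be an unrepresented $2$-sum at either. Iterating this kind of move should force a $\ZZ_2^d$-structure. Concretely, I expect to argue: fix $y\in A\setminus\{0\}$ and an arbitrary $z\in A\setminus\{0,y\}$ (here $|A|\ge 5$ gives us room to choose); using that at most one $2$-sum is missed at $0$ and at most one at $y$, at least one of the two sums $\{y+z$ represented at $0$, $z$ represented at $y\}$ must hold — in fact, counting more carefully over all choices of $z$, the exceptional $g_0$ and $g_y$ can only spoil boundedly many $z$, so for most $z$ we get $y+z\in A$ and $z-y=z-y\in A$ simultaneously, whence (as in Lemma~\ref{lem:two}) $2z=(y+z)+(z-y)$ must be represented at $z$, forcing $y+z=z-y$ and so $2y=0$. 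Since this holds for all but boundedly many $z$, and then by symmetry and re-choosing base points, I would bootstrap to $2y=0$ for \emph{every} $y\in A$; then $A$ lies in an elementary $2$-subgroup and $A+A=(2\wedge A)\cup\{0\}$, so $|A+A|\le |A|+1$, and $A+A\supseteq A$ forces $A+A$ to be a subgroup containing $A$. The hypothesis that $A$ is \emph{not} an almost $2$-coset is precisely what rules out $|A+A|=|A|+1$ with $A$ a $2$-coset minus a point: by Lemma~\ref{lem:oratmost}, a proper subset of a $2$-coset that is neither a $2$-coset nor an almost $2$-coset has $|2\wedge A|>|A|$, contradicting $|2\wedge A|=|A|$; so in our remaining case $A$ must actually be a full $2$-coset, giving $|A+A|=|A|$.

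The main obstacle I anticipate is the bookkeeping in the step "for all but boundedly many $z$": the single exceptional unrepresented sum at $0$ and at $y$ must be handled so as not to obstruct the deduction $2y=0$, and one must be careful that $|A|\ge 5$ supplies enough elements $z$ to run the argument and then to propagate $2y=0$ from "most $y$" to "all $y$". An alternative, possibly cleaner, route is to invoke Lemma~\ref{lem:two} in the contrapositive together with Lemma~\ref{lem:oratmost} to reduce immediately to the subcase where $A$ is \emph{not} contained in any $2$-coset, and in that subcase show directly that $|2\wedge A|=|A|$ already forces, via the represented-sum identity above, that $A+A=A\cup(2\wedge A)$ collapses onto a subgroup — here the non-almost-$2$-coset hypothesis is used only through Lemma~\ref{lem:oratmost} to clear away the $2$-coset-adjacent configurations. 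I would write up whichever of these two organisations keeps the case analysis shortest, but both hinge on the same representability identity and on Lemmas~\ref{subgroup}, \ref{lem:two} and \ref{lem:oratmost}.
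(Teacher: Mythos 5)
Your easy direction and your disposal of the subcase $|2\wedge A|<|A|$ (via Lemmas~\ref{lem:two} and~\ref{subgroup}) match the paper. The problem is the core case $|2\wedge A|=|A|$, where your plan is to force $2y=0$ for every $y\in A$ and conclude that $A$ is (essentially) a $2$-coset. That target statement is false: take $G=\ZZ_5$ and $A=\ZZ_5$. Then $|A|=5$, $A$ is not an almost $2$-coset, $|2\wedge A|=5=|A|$, and $|A+A|=|A|$ holds because $A$ is a coset of $\ZZ_5$ --- but no nonzero element has order $2$. The conclusion of the lemma in this regime is that $A$ is a coset of \emph{some} subgroup, not of an elementary $2$-subgroup, so any argument that derives $2$-torsion must break.

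Where it breaks is exactly the step you flag as needing bookkeeping. In Lemma~\ref{lem:two} the contradiction came from the fact that $|2\wedge A|=|A|-1$ forces \emph{every} $2$-sum to be represented at \emph{every} element, so exhibiting $2z=(y+z)+(z-y)$ as a $2$-sum not represented at $z$ is absurd. When $|2\wedge A|=|A|$, each element is permitted exactly one unrepresented $2$-sum, and $2z$ can be precisely that one (in $\ZZ_5$, the sum $2z$ is always the unique $2$-sum not represented at $z$). So from $y+z,\,z-y\in A$ you cannot conclude $y+z=z-y$; the deduction $2y=0$ does not follow, and no amount of averaging over $z$ repairs it. The paper's proof goes in a different direction: it shows $A+A=2\wedge A$ by proving that for each $x\in A$ the element $2x$ is itself a sum of two \emph{distinct} elements of $A$. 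It first finds three elements $y_1,y_2,y_3$ with $2x\neq 2y_i$ (ruling out the alternative using Lemma~\ref{lem:oratmost} and the non-almost-$2$-coset hypothesis --- this is the only place that hypothesis is used), then uses the ``at most one unrepresented sum per vertex'' rigidity twice, at $x$ and then at an auxiliary element $u$, to write $2x=y_1+z$ with $y_1\neq z$. Your proposal is missing this idea, and its stated strategy cannot be completed.
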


\begin{proof} Since $2\wedge A$ is a subset of $A+A$, one direction of the implication is trivial.  Also, we remark that if $A$ is a $2$-coset then $|A+A|=|A|$.  In particular, by Lemma~\ref{lem:two}, this deals with the case that $|2\wedge A|<|A|$.

Thus, we may suppose that $A$ is an additive set of cardinality at least $5$, that $A$ is not an almost $2$-coset and that $|2\wedge A|=|A|$.  We shall prove that in this case $A+A=2\wedge A$.  To do so, it suffices to prove that for each element $x\in A$ there exists a pair of distinct elements $y,z\in A$ such that $2x=y+z$.\vspace{0.2cm}

Fix $x\in A$.  We first claim that there are three elements $y_1,y_2,y_3$ of $A$ such that $2 x\neq 2 y_i$ for $i=1,2,3$.  Observe that if this were not the case then $A$ would contain a subset $C$ of cardinality $|A|,|A|-1$ or $|A|-2$ such that $2c=2c'$ for all $c,c'\in C$.  In fact each of these three possibilities contradicts the equality $|2\wedge A|=|A|$.  If $|C|=|A|$ then $A$ is a subset of a $2$-coset and so $|2\wedge A|>|A|$ by Lemma~\ref{lem:oratmost}, a contradiction.  If $|C|=|A|-1$ then there is an element $a\in A$ such that $A=C\cup \{a\}$, where $C$ is a subset of a $2$-coset and $a$ is not an element of this coset.  It follows that $2\wedge C$ and $a+C$ are disjoint subsets of $2\wedge A$, and so $|2\wedge A|\ge 2|C|-1 \ge 2|A|-3>|A|$, a contradiction.  Finally, if $|C|=|A|-2$ then there exist elements $a,a'\in A$ such that $A=C\cup \{a,a'\}$, where $C$ is a subset of a $2$-coset and $a,a'$ do not belong to this coset.  Arguing as above we obtain that $|2\wedge A|\ge 2|C|-1\ge 2|A|-5$, which is a contradiction unless $|A|=5$.  In the case $|A|=5$ we note that the subset $C$ has cardinality $3$, and so $|2\wedge A|\ge 3+3>|A|$, as $|2\wedge C|=3$ for all additive sets of cardinality $3$, again a contradiction.

Having established the claim we may now assume that there are three elements $y_1,y_2,y_3$ of $A$ such that $2 x\neq 2 y_i$ for $i=1,2,3$.  Since at most one $2$-sum is not represented at $x$, one of the sums $y_1+y_2,y_1+y_3$ is represented at $x$, without loss of generality $y_1 + y_2$.  Let $u\in A$ be such that $x+u=y_1+y_2$.  Arguing as before, at least one of the sums $x+y_1,x+y_2$ is represented at $u$, without loss of generality $x+y_2$.  Let $z\in A$ be such that $u+z=x+y_2$, and set $y=y_1$.  We claim that $y+z=2x$.  Indeed $$y+z\, =\, y+y_2+z-y_2\, =\, x+u+z-y_2\, =\, x+x+y_2-y_2\, =\, 2x\, .$$\end{proof}

In the case that $|A|=4$ we have the following.\vspace{0.1cm}

\begin{lemma}\label{four} Let $A\ssq G$ be an additive set of cardinality $4$ for which $|2\wedge A|=|A|$.  Then $A$ is the union of two cosets of a subgroup of order $2$.\end{lemma}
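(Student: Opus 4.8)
The plan is to reformulate the hypothesis as a statement about a proper edge-colouring of $K_4$ and then read off the subgroup directly. Write $A=\{a,b,c,d\}$ and colour the edge $\{x,y\}$ of $K_4$ by $x+y$; this colouring is proper, since $x+y=x+y'$ forces $y=y'$. The six edges carry the six $2$-sums of $A$, so the assumption $|2\wedge A|=4$ says that exactly four colours appear. Each colour class is a matching, and a matching in $K_4$ has at most two edges, so the colour-class sizes are four positive integers, each at most $2$, summing to $6$: the only possibility is $2,2,1,1$. A matching of size two in $K_4$ is a perfect matching, and the two classes of size two are distinct (being disjoint sets of edges), so two of the three perfect matchings of $K_4$ are monochromatic, in two different colours.

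Next I would fix coordinates. Permuting $\{b,c,d\}$, I may assume the two monochromatic perfect matchings are $\{\{a,b\},\{c,d\}\}$ and $\{\{a,c\},\{b,d\}\}$, i.e.
\[ a+b=c+d \quad\text{and}\quad a+c=b+d. \]
Subtracting these identities gives $b-c=c-b$, so $2(b-c)=0$; since $b\neq c$, the element $h:=b-c$ has order $2$ and $H:=\langle h\rangle=\{0,h\}$ is a subgroup of order $2$. It then remains to check that $A$ is a union of two cosets of $H$. From $2b=2c$ we get $b+h=2b-c=c$, so $b+H=\{b,c\}$; from $a+b=c+d$ we get $a-d=c-b\in H$, and since $a\neq d$ this forces $a-d=h$, hence $d=a+h$ (using $-h=h$) and $a+H=\{a,d\}$. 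Thus $A=\{a,d\}\cup\{b,c\}=(a+H)\cup(b+H)$, and the two cosets are distinct because $a+H=b+H$ would give $\{a,d\}=\{b,c\}$ and hence $|A|\le 2$. This presents $A$ as the union of two cosets of the order-$2$ subgroup $H$, as claimed.

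I do not expect a genuine obstacle in this argument; the only points requiring a little care are the elementary counting that forces the colour-class sizes to be $2,2,1,1$ (equivalently, that two perfect matchings are monochromatic), and the sign bookkeeping in a group with $2$-torsion when verifying that $a+H$ and $b+H$ are exactly $\{a,d\}$ and $\{b,c\}$. One should also record in passing that the coincidences used are automatically non-degenerate — for instance, $a+b=a+c$ cannot occur, as it would give $b=c$ — so that the four colours really are pairwise distinct.
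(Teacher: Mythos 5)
Your proof is correct and takes essentially the same route as the paper's: the counting of six pairs against four sums (which you phrase via colour classes of a proper edge-colouring of $K_4$, while the paper states it directly as ``two sums are repeated'') yields the two identities $a+b=c+d$ and $a+c=b+d$, and the same algebraic manipulation then produces the order-two element and the decomposition of $A$ into two cosets. The differences are purely presentational.
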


\begin{proof} Since there are six pairs of elements of $A$ and only $4$ distinct sums, two sums are repeated.  It follows that the elements of $A$ may be labelled $x,y,z,u$ in such a way that $$x+y\, =\, z+u\qquad \text{and}\qquad x+z\, =\, y+u\, .$$  Hence $$2x\, =\, (x+y)+(x+z)-y-z\, =\, (z+u)+(y+u)-y-z\, =\, 2u\, ,$$ and similarly $2y=2z$.  Thus, $A$ is the union of two cosets of the subgroup $\{0,(x-u)\}=\{0,(z-y)\}$, as required. \end{proof}

\section{Proof of Proposition~\ref{prop:ttk}}\label{sec:ttk}

We first give a proof of Proposition~\ref{prop:ttk} assuming the graph-theoretic Proposition~\ref{graphprop}.  The rest of the section will then be dedicated to proving Proposition~\ref{graphprop}.

\begin{proof}[Proof of Proposition~\ref{prop:ttk}] Let $A\ssq G$ be an additive set of cardinality $n\ge 5$.  Define an edge-colouring of the complete graph on vertex set $A$ by assigning colour $x+y$ to the edge $\{x,y\}$ (for each pair of elements $x,y\in A$).  Since $x+y\neq x+z$ when $y,z$ are distinct, this edge-colouring is proper.  \vspace{0.2cm}

$(i)$ If $|2\wedge A|\ge |A|$ then at least $n=|A|$ colours are used in this colouring.  By Proposition~\ref{graphprop} there exists a subset $B \ssq A$ of cardinality $\lfloor (n+3)/2\rfloor$ such that $c$ uses at least $n$ colours on the edges of $K_n[B]$.  Translating back to additive language, this says precisely that $|2\wedge B|\ge n=|A|$, as required.\vspace{0.2cm}

$(ii)$ If $|2\wedge A|<|A|$ then $n-1=|A|-1$ colours are used in this colouring.  By Proposition~\ref{graphprop} there exists a subset $B \ssq A$ of cardinality $n/2 +1$ such that $c$ uses $n-1$ colours on the edges of $K_n[B]$.  Translating back to additive language, this says precisely that $|2\wedge B|= |A|-1$, as required.\end{proof}

Our approach to proving Proposition~\ref{graphprop} is to select the vertex subset $U$ uniformly at random from the collection of subsets of the appropriate cardinality and prove a lower bound on the expected number of colours used in $K_n[U]$.  The following lemma will help us prove the required bound on the expectation.\vspace{0.1cm}

Given two positive integers $a,n$ such that $n\ge 5$ and $a\le n/2$ we define the probability $p(a,n)\in [0,1]$ as follows. Let $p(a,n)$ be the probability that a subset $U \subseteq \{1,\dots ,n\}$ selected uniformly at random from the collection of subsets of cardinality $\lfloor (n+3)/2\rfloor$ covers at least one of the pairs $\{1,2\},\dots ,\{2a-1,2a\}$.  Note that the same probability $p(a,n)$ applies if the pairs $\{1,2\},\dots ,\{2a-1,2a\}$ are replaced by any $a$ disjoint pairs $e_1,\dots ,e_a$ of elements of $\{1,\dots ,n\}$. 

\begin{lemma}\label{prob} Let $a,n$ be two positive integers such that $n\ge 5$ and $a\le n/2$.  Then $$p(a,n)\ge \frac{2a}{n}\, ,$$ with equality if and only if $a=n/2$.\end{lemma}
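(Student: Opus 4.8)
The plan is to pass to complements and set up a recursion in the number of pairs. Write $m=\lfloor(n+3)/2\rfloor$ for the size of the random set $U$, and put $W:=\{1,\dots,n\}\setminus U$, a uniformly random subset of size $w:=n-m=\lfloor n/2\rfloor-1$. Fixing disjoint pairs $P_1,\dots,P_a$, the set $U$ covers $P_i$ exactly when $W\cap P_i=\emptyset$, so $p(a,n)=1-q(a,n)$ where
\[ q(a,n)\ :=\ \Pr[\,W\cap P_i\neq\emptyset\ \text{for all }i\,]\]
is the probability that $W$ meets every pair. It therefore suffices to prove $q(a,n)\le (n-2a)/n$, with equality only for $a=n/2$.

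The key step will be the inequality
\[ q(a,n)\ \le\ \mu(n)\,q(a-1,n-2),\qquad \mu(n):=1-\frac{v(v-1)}{n(n-1)},\quad v:=n-w=\lceil n/2\rceil+1,\]
valid for $a\ge 1$ with the convention $q(0,\cdot)=1$. To prove it I would condition on $s:=|W\cap P_a|\in\{0,1,2\}$. The case $s=0$ contributes nothing. Given $s\ge 1$ together with the actual value of $W\cap P_a$, the remaining set $W\setminus P_a$ is uniform over the $(w-s)$-subsets of the $(n-2)$-element set $\{1,\dots,n\}\setminus P_a$, so the conditional probability that $W$ meets $P_1,\dots,P_{a-1}$ equals $g(w-s)$, where $g(\ell)$ is the probability that a uniformly random $\ell$-subset of an $(n-2)$-set meets $a-1$ fixed disjoint pairs. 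The function $g$ is non-decreasing (couple a uniform $\ell$-subset with the $(\ell-1)$-subset obtained by deleting a uniformly random element of it: deletion cannot create a new pair that is met), and by the arithmetic $w-1=\lfloor n/2\rfloor-2=\lfloor(n-2)/2\rfloor-1$ the value $g(w-1)$ is precisely $q(a-1,n-2)$. Hence the $s=1$ and $s=2$ contributions are each at most $g(w-1)=q(a-1,n-2)$ times the corresponding probability, and summing gives $q(a,n)\le\Pr[W\cap P_a\neq\emptyset]\,q(a-1,n-2)=\mu(n)\,q(a-1,n-2)$, using $\Pr[W\cap P_a=\emptyset]=\binom{n-2}{w}\big/\binom{n}{w}=v(v-1)/(n(n-1))$.

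Iterating the recursion gives $q(a,n)\le\prod_{i=0}^{a-1}\mu(n-2i)$, so it remains to bound each factor. I claim $\mu(n')\le(n'-2)/n'$ for all $n'\ge 3$, with equality exactly when $n'\in\{2,4\}$; this is equivalent to $v'(v'-1)\ge 2(n'-1)$ with $v'=\lceil n'/2\rceil+1$, which reduces to $(\ell-1)(\ell-2)\ge 0$ when $n'=2\ell$ and to $\ell^2-\ell+2\ge 0$ when $n'=2\ell+1$, both trivial. Therefore
\[ q(a,n)\ \le\ \prod_{i=0}^{a-1}\mu(n-2i)\ \le\ \prod_{i=0}^{a-1}\frac{n-2i-2}{n-2i}\ =\ \frac{n-2a}{n}\]
by telescoping, which gives $p(a,n)\ge 2a/n$. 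For the equality statement (recall $n\ge 5$ and $1\le a\le n/2$): if $a<n/2$ then $n-2a>0$ and the $i=0$ factor is strictly below its maximum, $\mu(n)<(n-2)/n$ since $n\ge 5\notin\{2,4\}$, while the remaining factors are at most their maxima and non-negative, so $\prod_i\mu(n-2i)<(n-2a)/n$ and $p(a,n)>2a/n$; if $a=n/2$ (which forces $n$ even) the product contains the factor $\mu(2)=0$, so $q(a,n)=0=(n-2a)/n$ and $p(a,n)=1=2a/n$. Thus equality holds precisely when $a=n/2$.

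I expect the only delicate point to be setting up the recursion correctly: checking that the complement-size $w$ shrinks by exactly $1$ when $n$ shrinks by $2$, so that the $s=1$ term genuinely is $q(a-1,n-2)$, and recognising that the $s=2$ term must be controlled through the monotonicity of $g$ rather than identified with a smaller instance of $q$. Once the recursion is in hand, the remaining estimates are one-line verifications.
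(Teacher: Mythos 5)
Your proof is correct, but it takes a genuinely different route from the paper's. The paper fixes a maximal family $F$ of $\lfloor n/2\rfloor$ disjoint pairs, notes by pigeonhole that \emph{every} set $U$ of size $\lfloor (n+3)/2\rfloor$ covers at least one pair of $F$, and then double-counts: summing the indicators $X_E$ over all $a$-element subfamilies $E\ssq F$ gives $\sum_E X_E\ge\binom{\lfloor n/2\rfloor-1}{a-1}$ pointwise, whence $p(a,n)\ge\binom{\lfloor n/2\rfloor-1}{a-1}/\binom{\lfloor n/2\rfloor}{a}=a/\lfloor n/2\rfloor\ge 2a/n$ in three lines, with the equality case read off from the existence of a $U_0$ covering two pairs of $F$. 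You instead pass to the complement $W$, set up the recursion $q(a,n)\le\mu(n)\,q(a-1,n-2)$ by conditioning on $|W\cap P_a|$ together with a coupling argument for the monotonicity of $g$, and telescope. I checked the details: $w=\lfloor n/2\rfloor-1$ does drop by exactly $1$ when $n$ drops by $2$, so $g(w-1)=q(a-1,n-2)$ as claimed; the factor inequality $v'(v'-1)\ge 2(n'-1)$ holds with equality exactly at $n'\in\{2,4\}$; and the degenerate instances of the recursion (where $w'=0$, so $q=\mu=0$) are harmless. What the paper's argument buys is brevity and the slightly stronger intermediate bound $a/\lfloor n/2\rfloor$, which already forces strictness whenever $n$ is odd; what yours buys is a self-contained induction that localizes the source of strictness in the single factor $\mu(n)<(n-2)/n$ for $n\ge 5$. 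One cosmetic slip: you assert the factor bound for all $n'\ge 3$ but then invoke equality at $n'=2$; the bound does hold at $n'=2$ as well (both sides vanish), so nothing breaks, but the quantifier should be $n'\ge 2$.
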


\begin{proof}  Let $F$ be a set of $\lfloor n/2\rfloor$ disjoint pairs of elements of $\{1,\dots ,n\}$. 
For each subset $E \subseteq F$ of cardinality $|E|=a$, let $X_E$ be the indicator random variable for $U$ covering at least one pair of $E$.
On the one hand, the pigeonhole principle implies that every subset $U \subseteq \{1,\dots ,n\}$ of cardinality $\lfloor (n+3)/2\rfloor$ covers at least one pair of $F$. 
Thus, we obtain
$$\displaystyle\sum_{\substack{E \subseteq F \\ |E|=a}} X_E \ge {\lfloor n/2\rfloor-1 \choose a-1} \, .$$
On the other hand, it follows from the above discussion that for every $E \subseteq F$ of cardinality $|E|=a$, one has $p(a,n)=\Pr(X_E=1)$. 
Therefore, linearity of expectation yields
$$p(a,n) \ge {\lfloor n/2\rfloor-1 \choose a-1} / {\lfloor n/2\rfloor \choose a} = \frac{a}{\lfloor n/2\rfloor} \, \ge \frac{2a}{n} \, .$$

Now, for equality to hold, $n$ must be even. In particular, $n \ge 6$ and $\lfloor (n+3)/2\rfloor=n/2+1$. 
Moreover, there is a subset $U_0 \subseteq \{1,\dots ,n\}$ of cardinality $n/2+1$ covering at least two pairs of $F$. 
Hence the number of subsets $E \subseteq F$ of cardinality $|E|=a$ such that $U_0$ covers at least one pair of $E$ strictly exceeds ${n/2-1 \choose a-1}$ whenever $a < n/2$.  
Therefore, the equality $p(a,n)=2a/n$ can only occur when $a=n/2$, and it is easily seen that $p(a,n)=1$ holds in this case.
\end{proof}

We now complete our proof of Proposition~\ref{prop:ttk} by proving Proposition~\ref{graphprop}.

\begin{proof}[Proof of Proposition~\ref{graphprop}] $(i)$ Let $U$ be selected uniformly at random from the collection of vertex subsets of cardinality $\lfloor (n+3)/2\rfloor$.  For each $i\in \N$, let $a_i$ denote the number of times colour $i$ is used in the colouring $c$.  Now, by Lemma~\ref{prob}, the expected number of colours used in $K_n[U]$ is
\[\sum_{i\in \N}p(a_i,n)\, \ge\, \sum_{i\in \N}\frac{2a_i}{n}\ =\, n-1\, .\]
Furthermore, since at least $n$ colours are used in the colouring $c$, some $a_i$ must lie strictly between $0$ and $n/2$. 
Thus, the above inequality is strict, and the expected number of colours used in $K_n[U]$ is at least $n$.  
In particular, there exists a subset $U$ of $\lfloor (n+3)/2\rfloor$ vertices such that at least $n$ colours are used in $K_n[U]$.
\vspace{0.2cm}

$(ii)$ Since each colour may be used at most $n/2$ times and $c$ uses $n-1$ colours, each colour class is a perfect matching.  
It is then easily observed that for \emph{every} subset $U$ of $n/2+1$ vertices, 
$K_n[U]$ contains an edge from each of the $n-1$ colour classes.\end{proof}

\section{Proof of Theorem~\ref{main}}\label{sec:mainproof}

In this section we bring together the results of the previous sections to prove Theorem~\ref{main}.  We note that the results of Section~\ref{sec:two} together establish the $k=2$ case of Theorem~\ref{main}.  We may immediately complete the proof of the first part of Theorem~\ref{main}.\vspace{0.1cm}

\begin{proposition}\label{first} Let $A \ssq G$ be an additive set and let $2\le k\le |A|-2$.  Then $$|k\wedge A|\, \ge \, |A|\vspace{0.1cm}$$ unless $k\in \{2,|A|-2\}$ and $A$ is a $2$-coset.\end{proposition}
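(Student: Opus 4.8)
The plan is to first dispose of the exceptional values $k\in\{2,|A|-2\}$, and then, for $3\le k\le|A|-3$, to split into two cases according to the size of $2\wedge A$. Throughout I will exploit the symmetry $|k\wedge A|=|(|A|-k)\wedge A|$ to assume $k\le|A|/2$; since the set $\{2,|A|-2\}$ is itself symmetric, this costs nothing. If $k=2$, then Lemma~\ref{lem:two} already gives what we need: either $A$ is a $2$-coset, which is the stated exceptional case, or $|2\wedge A|\ge|A|$. Hence I may assume $3\le k\le|A|/2$, so in particular $|A|\ge 6$ and Proposition~\ref{prop:ttk} applies.

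Suppose first that $|2\wedge A|\ge|A|$, and write $n=|A|$. By Proposition~\ref{prop:ttk}(i) there is a set $B\ssq A$ with $|B|=\lfloor(n+3)/2\rfloor$ and $|2\wedge B|\ge n$. An elementary estimate gives $|A\setminus B|=n-\lfloor(n+3)/2\rfloor\ge\lfloor n/2\rfloor-2\ge k-2$, so I can pick distinct elements $a_1,\dots,a_{k-2}\in A\setminus B$; then by \eqref{ttk} we have $k\wedge A\supseteq a_1+\dots+a_{k-2}+2\wedge B$, and since translation does not change cardinality, $|k\wedge A|\ge|2\wedge B|\ge n=|A|$.

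Now suppose $|2\wedge A|<|A|$, so that $A$ is a $2$-coset by Lemma~\ref{lem:two}. Since $k\wedge(v+H)=kv+k\wedge H$, translating reduces us to the case $A=H$, an elementary $2$-subgroup isomorphic to $\ZZ_2^d$ with $|A|=2^d$; from $3\le k\le|A|-3$ we get $|A|\ge 8$, i.e.\ $d\ge 3$, and recall we have arranged $3\le k\le|A|/2=2^{d-1}$. The relevant facts here are that in $\ZZ_2^d$ every non-identity element $g$ is a $2$-sum, the $2^{d-1}$ pairs $\{b,b+g\}$ representing $g$ partition $A$, and $A$ is a group. Thus, given any $(k-2)$-subset $\{a_1,\dots,a_{k-2}\}$ of $A$ and any $g\neq 0$, since $k-2<2^{d-1}$ some representing pair $\{b,b'\}$ of $g$ is disjoint from $\{a_1,\dots,a_{k-2}\}$, whence $a_1+\dots+a_{k-2}+b+b'$ is a $k$-sum, equal to $\sigma+g$, where $\sigma:=a_1+\dots+a_{k-2}$. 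Letting $g$ range over $A\setminus\{0\}$ gives $A\setminus\{\sigma\}=\sigma+(A\setminus\{0\})\ssq k\wedge A$. Finally, not all $(k-2)$-subsets of $A$ have a common sum (swap a single element), so choosing two of them with distinct sums $\sigma\neq\sigma'$ yields $k\wedge A\supseteq(A\setminus\{\sigma\})\cup(A\setminus\{\sigma'\})=A$, i.e.\ $|k\wedge A|=|A|$. (One could instead route this last case through Proposition~\ref{prop:ttk}(ii), taking $2\wedge B=A\setminus\{0\}$.)

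The step I expect to require genuinely separate thought is this $2$-coset case: Proposition~\ref{prop:ttk}(i) is of no help there, since its hypothesis $|2\wedge A|\ge|A|$ is exactly what fails, so the bound $|k\wedge A|\ge|A|$ for $k\ge 3$ must be obtained by the hands-on argument above. Everything else is routine: the cardinality inequality in the first case, and the appeal to Section~\ref{sec:two} to settle $k\in\{2,|A|-2\}$.
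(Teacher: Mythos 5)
Your proof is correct, and its overall architecture coincides with the paper's: the symmetry reduction to $k\le\lfloor|A|/2\rfloor$, the appeal to Lemma~\ref{lem:two} for $k=2$, and the dichotomy on whether $|2\wedge A|\ge|A|$, with the first case handled exactly as in the paper via Proposition~\ref{prop:ttk}(i) and a single translation. The only genuine divergence is in the $2$-coset case. The paper stays within its general framework: it invokes Proposition~\ref{prop:ttk}(ii) to produce $B\ssq A$ with $|B|=|A|/2+1$ and $|2\wedge B|=|A|-1$, observes that $2\wedge B=2\wedge A$ is an almost $2$-coset with trivial stabilizer, and takes two translates $c_1+\dots+c_{k-2}+2\wedge B$ and $c_2+\dots+c_{k-1}+2\wedge B$, which cannot coincide and so together contribute at least $|A|$ elements. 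You instead work directly inside $H\cong\ZZ_2^d$: the fixed-point-free involution $b\mapsto b+g$ partitions $H$ into $2^{d-1}$ pairs summing to $g$, and since $k-2<2^{d-1}$ one such pair avoids any prescribed $(k-2)$-set, giving $\sigma+(A\setminus\{0\})\ssq k\wedge A$; varying $\sigma$ then fills in the missing element. The two arguments are the same in spirit --- an almost-full translate plus one more element from a second translate --- but yours is more elementary and self-contained for this case (it needs neither Proposition~\ref{prop:ttk}(ii) nor the stabilizer language), at the cost of being special to the explicit $\ZZ_2^d$ structure, which Lemma~\ref{lem:two} has in any case already pinned down. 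All the cardinality bookkeeping in your write-up checks out ($|A\setminus B|=\lfloor|A|/2\rfloor-1\ge k-2$ in the first case, and $k-2<2^{d-1}$ in the second).
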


\begin{proof} By the symmetry $|(|A|-k)\wedge A|=|k\wedge A|$ it suffices to prove the proposition for $2\le k\le \lfloor |A|/2\rfloor$.  The $k=2$ case is precisely Lemma~\ref{lem:two}.  
For $k\in \{3,\dots ,\lfloor |A|/2\rfloor\}$, we may assume $|A|\ge 5$. We now consider the following two cases. 
\vspace{0.2cm}

\textbf{Case I.} If $|2\wedge A|\ge |A|$, then, by Proposition~\ref{prop:ttk}$(i)$, there exists a subset $B\ssq A$ of cardinality $\lfloor (|A|+3)/2\rfloor$ such that $|2\wedge B|\ge |A|$.  Now simply choose a sequence $c_1,\dots ,c_{k-2}$ of $k-2\le |A|/2 -2$ distinct elements of $A\setminus B$ and observe that 
$$k\wedge A\, \supseteq\, (c_1+\dots +c_{k-2})\, +\,\, 2\wedge B\, ,$$ and so $|k\wedge A|\ge |2\wedge B|\ge |A|$, as required.\vspace{0.2cm}

\textbf{Case II.} If $|2\wedge A|<|A|$, then, by Lemma~\ref{lem:two}, $A$ is a $2$-coset, and one easily observes that $2\wedge A$ is an almost $2$-coset.  Also, by Proposition~\ref{prop:ttk}$(ii)$, there exists a subset $B\ssq A$ of cardinality $|A|/2+1$ such that $|2\wedge B|=|A|-1$.  Of course $2\wedge B$ must be precisely $2\wedge A$, and so is also an almost $2$-coset.  Now, let $c_1,\dots ,c_{k-1}$ be distinct elements of $A\setminus B$ and note that the sets 
$$c_1+\dots +c_{k-2}\, +\,\, 2\wedge B\qquad \text{and} \qquad c_2+\dots +c_{k-1}\, +\, \, 2\wedge B$$ are not identical, since $c_{k-1}-c_1$ does not belong to the stabilizer of $2\wedge B$, which is trivial.
Finally, since $k\wedge A$ contains both these sets we deduce that $|k\wedge A|\ge |A|$.\end{proof}

For convenience we read out an immediate corollary of the above proposition, corresponding to the case that $A$ is a coset of a subgroup of $G$, that will be useful in our proof of Theorem~\ref{main}.\vspace{0.2cm}

\begin{corollary}\label{coset} Let $Q$ be a coset of a subgroup $H$ of an abelian group $G$, and let $k\in \{1,\dots ,|Q|-1\}$.  Then $k\wedge Q$ contains exactly those elements of the coset $kQ$, unless $k\in \{2,|Q|-2\}$ and $Q$ is a $2$-coset, in which case $k\wedge Q$ contains all but one element of $kQ$.\end{corollary}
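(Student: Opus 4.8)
The plan is to reduce the statement to the case where $Q$ is itself a subgroup, and then to read the conclusion off from Proposition~\ref{first} and Lemma~\ref{lem:two}, the only extra input being the trivial observation that a $k$-sum of a coset stays inside the corresponding coset $kQ$. So, first I would write $Q=g+H$, where $H$ is the subgroup of which $Q$ is a coset, and note that a $k$-subset of $Q$ has the form $\{g+h_1,\dots,g+h_k\}$ with the $h_i\in H$ distinct; its sum is $kg+(h_1+\dots+h_k)$. Hence $k\wedge Q=kg+(k\wedge H)$ and, similarly, $kQ=kg+kH$. Since moreover $Q$ is a $2$-coset exactly when $H$ is an elementary $2$-subgroup, it suffices to prove the corollary with $Q$ replaced by a finite subgroup $H$, and I set $n:=|H|$.

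Two easy facts then organise the rest. First, $kH=H$ for every $k\ge1$ (one inclusion is trivial, and $h=h+0+\dots+0$ gives the other), so $kQ$ is a coset of $H$ of size $n$. Second, a sum of $k$ elements of $H$ lies in $H$, so $k\wedge H\ssq H$ and in particular $|k\wedge H|\le n$ for every $k$ with $1\le k\le n-1$. The two extreme values of $k$ are immediate: $1\wedge H=H$, and $(n-1)\wedge H=H$ because excluding a single element from $H$ leaves an $(n-1)$-set whose sum ranges bijectively over $H$ as the excluded element varies; and for $k\in\{1,n-1\}$ the exceptional clause of the corollary cannot occur, since $k\in\{2,n-2\}$ would force $n=3$, which is not the order of an elementary $2$-subgroup.

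For the middle range $2\le k\le n-2$ I would distinguish two cases. If $H$ is not a $2$-coset, or if $k\notin\{2,n-2\}$, then Proposition~\ref{first} gives $|k\wedge H|\ge n$, and combined with $k\wedge H\ssq H$ this yields $k\wedge H=H=kH$. If on the other hand $H$ is a $2$-coset and $k\in\{2,n-2\}$, then $2\wedge H=H\setminus\{0\}$ (in an elementary $2$-subgroup $x+y=0$ forces $x=y$, while every nonzero $h$ equals $h+0$), so $|2\wedge H|=n-1$ by Lemma~\ref{lem:two}; applying the symmetry $|k\wedge H|=|(n-k)\wedge H|$ also gives $|(n-2)\wedge H|=n-1$. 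Together with $k\wedge H\ssq H$ this says precisely that for $k\in\{2,n-2\}$ the set $k\wedge H$ contains all but one element of $kH=H$. Translating everything back by $kg$ recovers the statement for $Q$.

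I do not expect a genuine obstacle here: the corollary is essentially the combination of the lower bound of Proposition~\ref{first} with the matching upper bound $k\wedge H\ssq kH=H$ valid in a subgroup, and the only places needing a little care are the translation-equivariance reduction, the two endpoint values $k=1$ and $k=n-1$, and checking that the $2$-coset/endpoint exception is handled correctly via Lemma~\ref{lem:two} and the $k\mapsto n-k$ symmetry.
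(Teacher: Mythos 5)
Your proposal is correct and follows essentially the same route as the paper: the upper bound comes from $k\wedge Q\ssq kQ$ with $|kQ|=|Q|$, the lower bound from Proposition~\ref{first}, and the exceptional case from $|2\wedge Q|=|Q|-1$ for a $2$-coset together with the $k\mapsto|Q|-k$ symmetry. The extra details you supply (the translation reduction and the endpoint cases $k=1$, $k=|Q|-1$) are left implicit in the paper but are handled correctly.
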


\begin{proof} It is elementary that $k\wedge Q$ is contained in the coset $kQ$ in all cases.  Since $kQ$ is also a coset of $H$ its cardinality is equal to that of $Q$.  The corollary now follows immediately from Proposition~\ref{first} together with the observation that $|2\wedge Q|=|Q|-1$ whenever $Q$ is a $2$-coset.\end{proof}

We now prove a lemma that will help us characterize the cases for which the equality $|k\wedge A|=|A|$ holds.  Given an additive set $A\ssq G$, we write $C(A)$ for the set of sizes of cosets contained in $A$, i.e., $h\in C(A)$ if and only if there is a subgroup $H$ of $G$ of order $h$ such that $Q\ssq A$ for some coset $Q$ of $H$.  Note that $|A|\in C(A)$ if and only if $A$ itself is a coset of a subgroup of $G$.\vspace{0.1cm}

\begin{lemma}\label{lem:aot} Let $A\ssq G$ be an additive set of even cardinality at least $6$.  If $|A|/2\in C(A)$ but $|A|\not \in C(A)$,
then $|k\wedge A|>|A|$ for all $k\in \{3,\dots,|A|-3\}$.\end{lemma}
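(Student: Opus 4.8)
\textbf{Proof plan for Lemma~\ref{lem:aot}.}

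The plan is to exploit the containment $(k-2)\wedge(A\setminus B)+2\wedge B\ssq k\wedge A$ from~\eqref{ttka}, choosing $B$ via Proposition~\ref{prop:ttk} and the distinguished half-size coset $Q\ssq A$ to force $k\wedge A$ to be strictly larger than $A$. Write $n=|A|$ and let $Q\ssq A$ be a coset of a subgroup $H$ with $|Q|=n/2$. By the symmetry $|k\wedge A|=|(n-k)\wedge A|$ it suffices to treat $3\le k\le n/2$. First I would dispose of the case $|2\wedge A|<|A|$: by Lemma~\ref{lem:two} this forces $A$ to be a $2$-coset, hence $|A|\in C(A)$, contradicting the hypothesis; so we may assume $|2\wedge A|\ge|A|$.

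Now apply Proposition~\ref{prop:ttk}$(i)$ to get $B\ssq A$ with $|B|=\lfloor(n+3)/2\rfloor$ and $|2\wedge B|\ge n$; note $|A\setminus B|=\lceil(n-3)/2\rceil\ge k-2$ since $k\le n/2$, so~\eqref{ttka} gives $(k-2)\wedge(A\setminus B)+2\wedge B\ssq k\wedge A$. If $|2\wedge B|>n$ we are already done by~\eqref{ttk}, so assume $|2\wedge B|=n$. The key point is to produce, inside $A\setminus B$, two distinct $(k-2)$-sums $s,s'$ whose difference $s-s'$ is \emph{not} in the stabilizer of $2\wedge B$; then $s+2\wedge B$ and $s'+2\wedge B$ are distinct subsets of $k\wedge A$ of size $n$, forcing $|k\wedge A|>n$. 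The case $k=3$ needs slightly separate (easy) handling, since then $(k-2)\wedge(A\setminus B)=A\setminus B$ itself, which has size $\ge 2$. For $k\ge4$ I would argue that if \emph{every} pair of $(k-2)$-sums of $A\setminus B$ differed by an element of the stabilizer $S$ of $2\wedge B$, then $(k-2)\wedge(A\setminus B)$ lies in a single $S$-coset, which (by the analogue of Lemma~\ref{subgroup}/Kneser-type reasoning, or directly) forces $A\setminus B$ to be contained in a coset of $S$ with $|A\setminus B|\le|S|$; combined with $|2\wedge B|=n=|B|+|A\setminus B|-\text{(small)}$ this would make $2\wedge B$ itself essentially a full translate of $S$, and one checks $|S|\ge n/2$, so $S$ is a large subgroup — at which point the coset $Q$ of size $n/2$ interacts with it to yield $|A|\in C(A)$, a contradiction.

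The cleanest route, which I would pursue instead of the above stabilizer analysis, is to leverage $Q$ directly. Since $|Q|=n/2$ and $|B|=\lfloor(n+3)/2\rfloor>n/2$, we have $B\cap Q\ne\emptyset$ and $B\not\supseteq Q$; more usefully, one can choose the set $B$ from Proposition~\ref{prop:ttk} or modify it so that $A\setminus B$ contains at least two elements of $Q$ together with one element outside $Q$ (this is possible because $|A\setminus B|\ge 2$ in the relevant range and $Q$, being half of $A$, meets $A\setminus B$). Then among the $(k-2)$-sums of $A\setminus B$ we can find two differing by $q-q'$ for distinct $q,q'\in Q$, i.e.\ by a nonzero element of $H$; if this lies in the stabilizer of $2\wedge B$ for \emph{all} such choices, then $H$ stabilizes $2\wedge B$, and since $2\wedge B\ssq 2Q'$-type cosets one deduces a coset of $H$ of size $|H|=n/2$ sitting inside $A$ in a way that propagates to $|A|\in C(A)$. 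The main obstacle — and the step I expect to require the most care — is exactly this last implication: ruling out that the stabilizer of $2\wedge B$ absorbs all available differences without already implying $A$ is a coset (i.e.\ $|A|\in C(A)$). I would handle it by a short case analysis on $|2\wedge B|$ versus $|B|$ using Lemma~\ref{subgroup} applied to $2\wedge B$ and its stabilizer, together with Corollary~\ref{coset} describing $k\wedge Q$ when $Q$ is a coset, to pin down the structure tightly enough to reach the contradiction.
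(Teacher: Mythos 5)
There is a genuine gap at exactly the step you flag as ``the main obstacle,'' and it is not a technicality: the claimed contradiction does not materialize in the configurations the lemma is actually about. Take $A=Q_1\cup Q_2$, a union of two cosets of a subgroup $H$ with $|H|=|A|/2$, where $A$ is not itself a coset (so $|A|/2\in C(A)$ but $|A|\notin C(A)$). It is entirely possible that the set $B$ produced by Proposition~\ref{prop:ttk}$(i)$ is $Q_1\cup\{x\}$ for a single $x\in Q_2$; then $2\wedge B\supseteq (2\wedge Q_1)\cup(x+Q_1)$ has size exactly $n$ and is stabilized by $H$ (when $Q_1$ is not a $2$-coset, $2\wedge Q_1$ is a full coset of $H$ by Corollary~\ref{coset}), while $A\setminus B=Q_2\setminus\{x\}$ lies in a single coset of $H$, so \emph{every} difference of $(k-2)$-sums of $A\setminus B$ lands in the stabilizer. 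Your argument then yields only the single translate $s+2\wedge B$, of size exactly $n$, and the conclusion you hope to reach --- that this situation forces $|A|\in C(A)$ --- is false: this configuration coexists perfectly well with $A$ not being a coset. Indeed, this is not an accident: the proof of Theorem~\ref{main} in the paper runs essentially your stabilizer analysis (with $S=2\wedge B$, $C=A\setminus B$ and Lemma~\ref{subgroup}), reaches precisely this half-size-coset case, and at that point \emph{invokes Lemma~\ref{lem:aot}} to finish. So an attempt to prove Lemma~\ref{lem:aot} by the same route either goes in a circle or leaves the hard case open.

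The paper's proof of the lemma abandons the $B$/translate machinery entirely and works with the coset decomposition of $A$ relative to the given subgroup $H$ of order $|A|/2$. In each of its cases it exhibits \emph{three} pairwise disjoint subsets of $k\wedge A$ lying in distinct cosets of $H$ --- for instance $(k-j)\wedge Q_1+j\wedge Q_2$ for $j=1,2,3$ when $A=Q_1\cup Q_2$, or $x+(k-1)\wedge Q_1$, $y+(k-1)\wedge Q_1$, $x+y+(k-2)\wedge Q_1$ when $A$ contains a coset $Q_1$ and meets two further cosets --- and lower-bounds each piece by $|H|-1$ or $|H|$ via Corollary~\ref{coset} and Observation~\ref{observe}, giving $|k\wedge A|\ge 3|H|-O(1)>|A|$. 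The disjointness comes from $2Q_1\neq 2Q_2$ in $G/H$, which is where the hypothesis $|A|\notin C(A)$ is used. To repair your proof you would need to supply an argument of this kind for the stalled case; at that point you are reproving the lemma by the paper's method, so the detour through Proposition~\ref{prop:ttk} buys nothing here.
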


We must prove a certain \emph{expansion} in the set of sums relative to the original set $A$.  In fact in almost all cases it is easy to prove (using Corollary~\ref{coset}) that at least three cosets of $H$ lie in $k\wedge A$ (where $H$ is a subgroup with $|H|=|A|/2$ and such that a coset of $H$ is contained in $A$) implying $|k\wedge A|\ge 3|A|/2$.  To deal with the few exceptional cases of Corollary~\ref{coset} we use the following observation.\vspace{0.1cm}

\begin{obs}\label{observe} Let $H$ be a subgroup of $G$, let $Q,R$ be cosets of $H$ and let $X\ssq Q, \, Y\ssq R$ be subsets such that $|X|+|Y|>|H|$.  Then $X+Y=Q+R.$\end{obs}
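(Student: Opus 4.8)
The plan is to prove the two inclusions $X+Y\ssq Q+R$ and $Q+R\ssq X+Y$ separately, the first being immediate and the second following from a one-line pigeonhole argument inside a single coset of $H$. The key preliminary remark is that $Q+R$ is itself a coset of $H$: writing $Q=q+H$ and $R=r+H$ we have $Q+R=(q+r)+(H+H)=(q+r)+H$, since $H$ is a subgroup and hence $H+H=H$. In particular $|Q+R|=|H|$, and the inclusion $X+Y\ssq Q+R$ is clear because $X\ssq Q$ and $Y\ssq R$.

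For the reverse inclusion I would fix an arbitrary element $g\in Q+R$ and seek $x\in X,\,y\in Y$ with $x+y=g$. The idea is to translate $Y$ by $g$: consider the set $g-Y:=\{g-y:y\in Y\}$. Since $g\in Q+R=(q+r)+H$ and $Y\ssq R=r+H$, every difference $g-y$ lies in $(q+r+H)-(r+H)=q+H=Q$. Thus $X$ and $g-Y$ are \emph{both} subsets of the single coset $Q$, which has exactly $|H|$ elements. Because $|X|+|g-Y|=|X|+|Y|>|H|=|Q|$, the two subsets cannot be disjoint, so there is some element $x\in X\cap(g-Y)$. Writing $x=g-y$ with $y\in Y$ gives $x+y=g$, whence $g\in X+Y$. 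As $g$ was arbitrary this yields $Q+R\ssq X+Y$, and combining the two inclusions gives $X+Y=Q+R$, as required.

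I do not anticipate a genuine obstacle here: the statement is an elementary transversal/pigeonhole fact, and the only point requiring a moment's care is the verification that $g-Y$ lands in the \emph{same} coset $Q$ that contains $X$ (this is where the coset structure is used, and it is what makes the cardinality comparison against $|H|$ legitimate). Everything else is bookkeeping. For completeness one might note that the hypothesis $|X|+|Y|>|H|$ is sharp, since if $X$ and a translate of $Y$ partition a coset of size $|H|$ there is a value of $g$ not covered; but this is not needed for the statement and I would omit it from the formal proof.
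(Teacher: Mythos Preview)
Your proof is correct and is precisely the standard pigeonhole argument the paper has in mind: the paper does not spell out a proof but simply remarks that the observation is a variant of the ``prehistoric lemma'' ($X+Y=G$ whenever $|X|+|Y|>|G|$), which is exactly what your translate-and-intersect argument establishes in the coset setting.
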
  

This observation is simply a variant of the ''prehistoric lemma'' which states that $X+Y=G$ whenever $X,Y\ssq G$ are such that $|X|+|Y|>|G|$.  We now prove Lemma~\ref{lem:aot}.

\begin{proof}[Proof of Lemma~\ref{lem:aot}] Let $H$ be a subgroup of cardinality $|A|/2$ such that some coset $Q$ of $H$ is contained in $A$.  By the usual symmetry argument, it suffices to prove the result for $k\in \{3,\dots ,\lfloor |A|/2\rfloor\}$.  We consider the following three cases.\vspace{0.2cm}

\textbf{Case I.} $A$ is the union of two cosets $Q_1,Q_2$ of $H$ and $k=3$.  Observe first that $2Q_1\neq 2Q_2$ in $G/H$, as $A$ itself is not a coset of a subgroup of $G$.  It follows that the cosets $3Q_1,2Q_1+Q_2$ and $Q_1+2Q_2$ are distinct, and therefore the sets 
\begin{equation}\label{threesubsets} 3\wedge Q_1,\, \, 2\wedge Q_1\,  +\, Q_2 \qquad \text{and}\qquad Q_1\, +\, \, 2\wedge Q_2\end{equation}
are disjoint subsets of $3\wedge A$.  If $|H|=3$ then it is easily observed that these sets have cardinalities $1,3,3$ respectively, which implies that $|3\wedge A|\ge 7>6=|A|$.  If $|H|\ge 4$ then it follows from Corollary~\ref{coset} that each of the sets $j \wedge Q_1, j\wedge Q_2$, where $j\in \{1,2,3\}$, has cardinality at least $|H|-1>|H|/2$, and so, using Observation~\ref{observe}, we obtain that the sets mentioned in~\eqref{threesubsets} have cardinalities at least $|H|-1,|H|,|H|$ respectively, and so $|3\wedge A|\ge 3|H|-1>|A|$, as required.\vspace{0.2cm}

\textbf{Case II.} $A$ is the union of two cosets $Q_1,Q_2$ of $H$ and $k\ge 4$.  Since $k\ge 4$ we may assume $|A|\ge 8$, and so $|H|\ge 4$.  It follows from Corollary~\ref{coset} that each of the sets \[ (k-j)\wedge Q_1\, ,\, \, j\wedge Q_2\qquad\qquad j\in \{1,2,3\}\]
has cardinality at least $|H|-1>|H|/2$.  Observation~\ref{observe} then yields
\[|(k-j)\wedge Q_1 \,+\, \, j\wedge Q_2|\, =\, |H|\qquad \qquad \text{for each }j\in \{1,2,3\}\, .\]
These three sets are disjoint (since, as argued in Case I, $2Q_1\neq 2Q_2$ in $G/H$) and are contained in $k\wedge A$.  It follows that $|k\wedge A|\ge 3|H|>|A|$, as required.\vspace{0.2cm}

\textbf{Case III.} $A$ contains a coset $Q_1$ of $H$ and meets at least two other cosets of $H$, say $Q_2$ and $Q_3$.  By translation we may assume that $Q_1=H$.  Let $x\in A\cap Q_2$ and $y\in A\cap Q_3$.  The sets
\[ x +(k-1)\wedge Q_1\, ,\,\, y +(k-1)\wedge Q_1 \qquad \text{and}\qquad x+y+(k-2)\wedge Q_1\]
are disjoint subsets of $k\wedge A$.  By Corollary~\ref{coset}, each of the three sets mentioned above has cardinality at least $|H|-1$.  Thus, $|k\wedge A|\ge 3|H|-3$.  If $|A|>6$, then $3|H|-3>|A|$ and the proof is complete.  If $|A|=6$ then $|H|=3$ is not a power of $2$, so that Corollary~\ref{coset} gives the stronger conclusion that each of the three sets mentioned above has cardinality at least $|H|$, so that $|k\wedge A|\ge 3|H|=3|A|/2>|A|$, and the proof is complete.\end{proof}

We are now ready to characterize the cases in which the equality $|k\wedge A| =|A|$ holds, completing the proof of our main theorem.

\begin{proof}[Proof of Theorem~\ref{main}] Let $A\ssq G$ be an additive set and let $k\in \{2,\dots ,|A|-2\}$.  
By Proposition~\ref{first}, the inequality \[|k\wedge A|\ge |A|\]
holds, unless $k\in \{2,|A|-2\}$ and $A$ is a $2$-coset.

Now suppose that the equality $|k\wedge A|=|A|$ holds for some $k\in \{2,\dots ,|A|-2\}$.  Then in particular the equality holds for some $k\in \{2,\dots ,\lfloor |A|/2\rfloor\}$.  If $k=2$, then either $|A|=4$, in which case $A$ is a union of two cosets of a subgroup of order $2$ by Lemma~\ref{four}, or $|A|\ge 5$, in which case $A$ is either an almost $2$-coset or a coset of a subgroup of $G$ by Lemmas~\ref{subgroup} and \ref{iff}.

Suppose now that the equality $|k\wedge A|=|A|$ holds for some $k\in \{3,\dots ,\lfloor |A|/2\rfloor \}$.  We prove that $A$ is a coset of a subgroup of $G$.  If $A$ is a $2$-coset then we are done immediately.  Hence we may assume that $A$ is not a $2$-coset.  In particular this implies (by Lemma~\ref{lem:two}) that $|2\wedge A|\ge |A|$ and (by Proposition~\ref{prop:ttk}) that $|2\wedge B|\ge |A|$ for some subset $B\ssq A$ with $|B|=\lfloor (|A|+3)/2\rfloor$.\vspace{0.1cm}

Set \[ S=2\wedge B \qquad \text{and} \qquad C=A\setminus B\, .\vspace{0.3cm}\]

\textbf{Claim.} There is a subgroup $H$ of $G$ such that $S$ is a union of cosets of $H$ and $C$ is contained in a coset of $H$.\vspace{0.3cm}

\textit{Proof of Claim.} By Lemma~\ref{subgroup} it suffices to prove that one of the equalities $|S+C|=|S|$ or $|S-C|=|S|$ holds.  If $k=3$, then $S+C$ is a subset of $k\wedge A$, and so \[|S+C|\le |k\wedge A| = |A| =|S|\, .\]
If $k\in \{4,\dots ,\lfloor |A|/2\rfloor\}$, we shall prove that every subset $C'$ of $C$ of cardinality $k-1$ is contained in a coset of $H$. Since the subset $C'$ is arbitrary, this certainly implies that $C$ is contained in a coset of $H$.  Given a subset $C'$ of $C$ of cardinality $k-1$, we denote by $\sigma_{C'}$ the sum $\sum_{a\in C'}a$.  Now, since $k\wedge A$ contains $S+(k-2)\wedge C'=S+\sigma_{C'}-C'$ we have that \[ |S+\sigma_{C'}-C'|\le |k\wedge A| = |A| = |S| \, .\]
Thus $|S-C'|=|S|$, completing the proof of the claim.\vspace{0.4cm}

It follows from the claim that $|H|$ divides $|S|=|A|$, and $|H|\ge |C|=\lceil (|A|-3)/2\rceil$.  There are three cases to consider.\vspace{0.2cm}

\textbf{Case I.} $|H|=|A|$. In this case $S$ is equal to a coset of $H$, implying that $B$ is contained in a coset $Q$ of $H$.  We claim that $C$ is also contained in $Q$, which implies that $A=B\cup C$ is indeed a coset of a subgroup of $G$.  Indeed, if $C$ were not contained in $Q$ then $k\wedge A$ would contain the $|A|$ elements in $S+(k-2)\wedge C$ and at least $|B|=\lfloor (|A|+3)/2\rfloor$ elements in $B+(k-1)\wedge C$, implying $|k\wedge A|>|A|$, a contradiction.\vspace{0.2cm}

\textbf{Case II.} $|H|=|A|/2$. For this case to occur $A$ must have even cardinality at least $6$, and $S$ is the union of two cosets of $H$.  Note that if $B$ met at least three cosets of $H$ then at least three cosets of $H$ would be represented in $S$.  Thus, $B$ meets exactly two cosets $Q_1,Q_2$ of $H$.  Likewise, if $B$ has at least two elements in each of $Q_1$ and $Q_2$ then at least three cosets of $H$ would be represented in $S$ (the equality $2Q_1=2Q_2$ may not hold, as in this case $B$, and therefore $S$, would be contained in a coset of a subgroup of $G$, and so $|H|=|S|=|A|$, which brings us back to Case I).  The only remaining case is that $B$ contains a coset $Q_1$ of $H$ and a single element of $Q_2$.  In particular $|A|/2=|H|\in C(A)$.  If $|A|\in C(A)$ then $A$ is a coset of a subgroup of $G$, as required.  If $|A|\not\in C(A)$ then it follows from Lemma~\ref{lem:aot} that $|k\wedge A|>|A|$, a contradiction.\vspace{0.2cm}

\textbf{Case III.} $|H|=|A|/3$. In this case $|A|/3=|H|\ge \lceil(|A|-3)/2\rceil$, which implies that $|A|\le 9$ and that $|A|$ is a multiple of $3$, and so $|A|\in \{6,9\}$.  If $|A|=9$, then we have that $|B|=6$, $|S|=9$, $|C|=|H|=3$ and $k\in \{3,4\}$.  We consider first the case $k=3$, so that $|3\wedge A|=|A|$.  By considering the sizes of $C$ and $S$ we see that $C$ is equal to a coset $Q_0$ of $H$ and $S$ is a union of three cosets of $H$.  If $B$ meets at least three cosets of $H$, say elements $b_1,b_2,b_3\in B$ belong to distinct cosets of $H$.  Then \[ 2\wedge Q_0 \,+b_1\, ,\,\, 2\wedge Q_0 \, +b_2\, ,\,\, 2\wedge Q_0 \, +b_3\qquad \text{and}\qquad 3\wedge Q_0\] are disjoint subsets of $3\wedge A$ with cardinalities $3,3,3,1$ respectively, so that $|3\wedge A|\ge 10>9=|A|$, a contradiction.  On the other hand, if $B$ meets only two cosets $Q_1,Q_2$ of $H$, then $B=Q_1\cup Q_2$, and $A=Q_0\cup Q_1\cup Q_2$.  If $Q_0\cup Q_1\cup Q_2$ is a coset of a subgroup of $G$ then we are done, so we may suppose (by relabelling if necessary) that $2Q_1\neq Q_0+Q_2$.  In this case \[Q_0+Q_1+Q_2 \, ,\,\,  Q_0\, +\, \, 2\wedge Q_1\, ,\,\, 2\wedge Q_1\, +Q_2\qquad \text{and}\qquad 3\wedge Q_1\] are disjoint subsets of $3\wedge A$ with cardinalities $3,3,3,1$, so that $|3\wedge A|\ge 10>9=|A|$, a contradiction.

Since the arguments for the case $|A|=9$ and $k=4$ and the case $|A|=6$ are similar to those given above we do not give them in full here, but they may be easily verified by the reader.\end{proof}

\section{A sequence version of Theorem~\ref{main}}\label{sec:seqversion}

In this section we prove an analogue of Theorem~\ref{main} for sequences (or multisets).
We denote by $S(A)$ the support of the sequence $A$. 
Note that $|A|$ now denotes the number of elements in the sequence $A$.  
The following result is mainly an easy consequence of Theorem~\ref{main}. 
However, a few special cases do need to be considered separately.
In the case that $|A|=|S(A)|$ then $A=S(A)$ and one may refer directly to Theorem~\ref{main}.
\vspace{0.1cm}

Given a sequence $A$, we write $S_j(A)$ for the set of elements appearing at least $j$ times in $A$.\vspace{0.1cm}

\begin{theorem}
\label{mainseq} 
Let $A$ be a sequence over $G$ with $|A|>|S(A)|$ and let $2\le k\le |A|-2$.  Then 
\[ |k\wedge A|\ge |S(A)|\]
with equality if and only if one of the following holds.
\begin{itemize}
\item[(i)] $S(A)$ is a coset of a subgroup of $G$.
\item[(ii)] $|S(A)|=2$ and $|S_2(A)|=1$.
\item[(iii)] $S(A)$ is an arithmetic progression of length three, and $S_2(A)$ contains only the middle term of the progression.
\end{itemize}
\end{theorem}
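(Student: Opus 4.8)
The plan is to reduce Theorem~\ref{mainseq} to Theorem~\ref{main} applied to the \emph{set} $S(A)$, via an elementary embedding, after disposing of the small-support cases by hand. Write $n=|A|$, $s=|S(A)|$. First I would record the sequence form of the symmetry $|k\wedge A|=|(n-k)\wedge A|$ (complementary sub-multisets), which permits the assumption $2\le k\le\lfloor n/2\rfloor$. The key observation is that if $A'$ is the sequence obtained from $A$ by deleting one copy of each element of its support, then for every integer $j$ with $2\le j\le\min(k,s-1)$ and $k-j\le n-s$, and every $(k-j)$-element sub-multiset $C$ of $A'$,
\[ \sigma(C)+\,j\wedge S(A)\ \subseteq\ k\wedge A, \]
since $B'\cup C$ is a valid $k$-element sub-multiset of $A$ for every $j$-subset $B'$ of $S(A)$. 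The hypotheses $2\le k\le n-2$ and $s<n$ guarantee that an admissible $j$ exists.

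For the inequality $|k\wedge A|\ge s$, the cases $s\le2$ are a direct computation (already producing alternative (ii)). For $s\ge3$ I would pick an admissible $j$: then $|j\wedge S(A)|\ge s$ in all cases — by Theorem~\ref{main} when $2\le j\le s-2$, trivially (a three-element set having $|2\wedge S(A)|=3$) when $s=3$, and because $|(s-1)\wedge S(A)|=|1\wedge S(A)|=s$ when $j=s-1$ — so the embedding gives $|k\wedge A|\ge s$. The one exception is when $S(A)$ is a $2$-coset and the only admissible index is $j=2$, which forces $k=2$; there I argue directly, writing $S(A)$ (after a translation) as a coset of an elementary $2$-subgroup and taking a repeated element $x$: then $2\wedge A$ contains $2\wedge S(A)$ together with $2x$, and $2x$ is exactly the element of $2S(A)$ missing from $2\wedge S(A)$, so $|2\wedge A|\ge s$.

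For the characterization of equality, suppose $|k\wedge A|=s$; then the embedding forces $|j\wedge S(A)|=s$ for our admissible $j$. The cases $s\le2$ are explicit and give exactly (ii) besides (i). If $s\ge6$ and $k\ne2$ (so $3\le k\le n-3$), one can choose an admissible $j\in\{3,\dots,s-3\}$, and Theorem~\ref{main} then forces $S(A)$ to be a coset of a subgroup, i.e.\ (i). If $s\in\{4,5\}$, or if $s\ge6$ and $k=2$, I would use $j=2$: either $S(A)$ is a $2$-coset (so (i)), or $|2\wedge S(A)|=s$ and the equality clause of Theorem~\ref{main} (with $k=2$) leaves only the possibilities that $S(A)$ is a coset, an almost $2$-coset, or a size-$4$ union of two cosets of an order-$2$ subgroup. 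The last two are ruled out uniformly: translating so a repeated element is $0$ puts $S(A)$ inside an elementary $2$-subgroup (almost $2$-coset case) or keeps it of the form $\{0,h,g,g+h\}$ with $h$ of order $2$ and $2g\notin\{0,h\}$ (size-$4$ case), and in either situation $0\notin2\wedge S(A)$ while $0=2\cdot0\in2\wedge A$, whence $|2\wedge A|>s$; so $S(A)$ is a coset. Finally $s=3$, where Theorem~\ref{main} says nothing about the $3$-element set $S(A)$, is treated via $\mathrm{Stab}(2\wedge S(A))$: since $k\wedge A=\sigma(C)+2\wedge S(A)$ for every admissible $C$, the sums $\sigma(C)$ all lie in one coset of this stabiliser, so if at least two elements of $A$ are repeated a swap produces a non-trivial stabiliser element, forcing $|\mathrm{Stab}(2\wedge S(A))|=3$ and making $S(A)$ a coset of order $3$; if exactly one element $e$ is repeated then $A$ consists of two singletons and copies of $e$, and a one-line computation of $k\wedge A$ gives equality exactly when $S(A)$ is a three-term arithmetic progression with middle term $e$ — alternative (iii) (the extreme value $k=2$ being handled by the same $2\wedge A$ argument as above). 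The converse implications are routine: (i) gives equality because all $k$-sums lie in one coset of size $s$ and the inequality supplies the reverse bound, and (ii), (iii) follow by direct enumeration.

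I expect the main obstacle to be the equality analysis in the low-support and boundary regimes: excluding the ``almost $2$-coset'' and ``union of two order-$2$ cosets'' shapes of $S(A)$, and, for $s=3$, separating the genuinely new exception (iii) from the coset case. This forces one to track the multiplicities of the repeated elements of $A$, rather than only its support — information that Theorem~\ref{main} alone does not provide.
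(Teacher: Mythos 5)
Your proposal is correct and follows essentially the same route as the paper: reduce to Theorem~\ref{main} applied to the support via the embedding $\sigma(C)+j\wedge S(A)\subseteq k\wedge A$ for a suitable admissible $j$, dispose of the small supports $|S(A)|\le 3$ by direct computation, and use a repeated element to supply the extra sum $2x$ that eliminates the $2$-coset-type exceptional shapes. The only step needing care is that for $|S(A)|\in\{4,5\}$ and $k\ge 3$ your exclusion of the almost $2$-coset and two-coset shapes as written bounds $|2\wedge A|$ rather than $|k\wedge A|$; translating by $\sigma(C)$ with $C$ a $(k-2)$-sub-multiset disjoint from one copy of $S(A)$ together with one extra copy of the repeated element repairs this in every case except $(|A|,|S(A)|,k)=(6,5,3)$, where taking $j=3$ and noting that no almost $2$-coset or size-$4$ union has cardinality $5$ settles the matter.
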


\begin{proof}
As usual it suffices to prove the result for $k\in \{2,\dots ,\lfloor |A|/2\rfloor\}$.
We first deal with the cases where $|S(A)|$ is small. 
If $|S(A)|=1$ then the inequality trivially holds as an equality and $S(A)$ is a coset of the trivial subgroup.  

If $|S(A)|=2$, say $S(A)=\{x,y\}$, then without loss of generality the sequence $A$ contains a subsequence $A'=x,x,y$. Then, either $k=2$ and we readily obtain $|2 \wedge A| \ge |2\wedge A'|=2$, or $k \ge 3$ and there is a subsequence $a_1,\dots ,a_{k-2}$ of $A$ which is disjoint from $A'$ and yields $|k \wedge A| \ge |2\wedge A'|=2$. If $S(A)$ is not a coset of a subgroup and $|S_2(A)|=2$, then $A$ contains a subsequence $A'=x,x,y,y$ and, arguing as above, $|k\wedge A| \ge |2\wedge A'| = 3$. 
Reciprocally, it can easily be checked that $|k \wedge A|=2$ whenever $|S_2(A)|=1$ or $S(A)$ is a coset of a subgroup.

If $|S(A)|=3$, say $S(A)=\{x,y,z\}$, then $A$ contains a subsequence $A'=x,y,z$. Now, either $k=2$ and we have $|2 \wedge A| \ge |2\wedge A'|=3$, or $k \ge 3$ and there is a subsequence $a_1,\dots ,a_{k-2}$ of $A$ which is disjoint from $A'$ and gives $|k \wedge A| \ge |2\wedge A'| = 3$. In addition, if $S(A)$ is not an arithmetic progression of length three then $A$ contains a subsequence of the form $A'=x,y,y,z$ and, arguing as above, $|k\wedge A|\ge |2\wedge A'| = 4$. If $S(A)$ is not a coset of a subgroup but an arithmetic progression of length three such that an element other than the middle term is represented at least twice in $A$, say $A$ contains $A'=x,x,y,z$, then $|k\wedge A| \ge |2\wedge A'| = 4$. Reciprocally, it can easily be checked that $|k \wedge A|=3$ whenever $S(A)$ is a coset of a subgroup or an arithmetic progression of length three such that $S_2(A)$ contains only the middle term of the progression.

From now on, we assume $|S(A)| \ge 4$. 
When $k=2$, we consider two cases. 
If $S(A)$ is a $2$-coset, an almost $2$-coset or a union of two cosets of a subgroup of order $2$, then it is easily seen that, since $|A|>|S(A)|$, the sequence $A$ has at least one more $2$-sum than $S(A)$, which gives the required result for these cases. 
Otherwise, the desired result follows immediately from Theorem~\ref{main}.

Now suppose $3 \le k \le \lfloor |A|/2 \rfloor$. If $|S(A)|=4$, say $S(A)=\{w,x,y,z\}$, then $A$ contains a subsequence $A'=w,x,y,z$. 
First, one can readily notice that if $S(A)$ is a coset of a subgroup then the desired inequality holds as an equality.  
Therefore, we now assume that $S(A)$ is not a coset of a subgroup and consider the following two cases. 
If $S(A)$ is not the union of two cosets of a subgroup of order $2$ then there is a subsequence $a_1,\dots ,a_{k-2}$ of $A$ which is disjoint from $A'$, and it follows from Theorem~\ref{main} that $|k \wedge A| \ge |2 \wedge A'| \ge 5$. 
If $S(A)$ is the union of two cosets of a subgroup of order $2$, then without loss of generality $A$ contains a subsequence $A'=w,w,x,y,z$. 
Now, either $k=3$ and we readily obtain $|3 \wedge A| \ge |3 \wedge A'| = |2 \wedge A'| \ge 5$, or $k \ge 4$ and there is a subsequence $a_1,\dots ,a_{k-3}$ of $A$ which is disjoint from $A'$. 
Therefore, $k\wedge A$ contains a translate of $3 \wedge A'$, so that $|k \wedge A| \ge |3 \wedge A'| = |2 \wedge A'| \ge 5$. 

We now consider the remaining case where $|S(A)| \ge 5$. 
If $3 \le k \le |S(A)|-3$, then the result follows immediately from Theorem~\ref{main}, by simply observing that $|k \wedge A| \ge |k \wedge S(A)|$. 

If $|S(A)|-2 \le k \le \lfloor |A|/2\rfloor$, let $A'$ be a subsequence of $A$ with $A'=S(A')=S(A)$. Since $|A|-|S(A)| \ge 2k - |S(A)| \ge k-2$, there is a subsequence $a_1,\dots,a_{k-3}$ of $A$ which is disjoint from $A'$.   
Thus, it follows from Theorem~\ref{main} that $|k \wedge A| \ge |3 \wedge A'| \ge |S(A)|$.  Furthermore, if $S(A)$ is not a coset of a subgroup then Theorem~\ref{main} gives a strict inequality.
\end{proof}

\section{Concluding Remarks}\label{ConcRem}

The results of this paper establish that an additive set $A$ has at least $|A|$ $k$-sums unless $k \in \{2,|A|-2\}$ and $A$ is a $2$-coset, and strictly more than $|A|$ $k$-sums except in a few specific cases. Though these results are precise in the cases they deal with, we expect that stronger bounds are true in general.  For example, one might hope to prove an analogue of~\eqref{THM:DdSH} for additive subsets $A$ of general abelian groups.  Indeed, Alon, Nathanson and Ruzsa~\cite{ANR} asked this question for $G=\ZZ_n$, where $n$ is not necessarily prime.\vspace{0.2cm}

In another direction one might ask what cardinality of a subset $A$ of an abelian group $G$ ensures that $A$ covers $G$ with its $k$-sums.  
For example, it is easily observed that if $G$ is of odd order and $|A|\ge (|G|+3)/2$, then $2\wedge A=G$ (see~\cite[Lemma 2.2]{GGHHLP02} or~\cite[Lemma 3.2]{Hami98}). 
One might expect that the condition $|A|\ge (|G|+3)/2$ should be sufficient to guarantee that $k\wedge A =G$ for all $k\in \{2,\dots ,|A|-2\}$, and in any abelian group $G$ of odd order.  However, this is not the case in $G= \ZZ_3^2$, where, taking $A$ to be the union of two cosets of a subgroup of order $3$, we have that $|A|=6=(|G|+3)/2$, but $|3\wedge A|=7<9=|G|$.  Yet, we believe this result should hold in all abelian groups of sufficiently large odd order.\vspace{0.2cm}

In the case that $k>2$ one might even expect that weaker conditions also imply that $k\wedge A=G$.  For example, Lev~\cite{Lev02} proved that if $G$ has sufficiently large odd order and $|A|>2|G|/5$ then $3\wedge A=G$.  Furthermore, the condition $|A|>2|G|/5$ may be weakened to $|A|>5|G|/13$ provided that $A$ is not a union of two cosets of a subgroup of order $5$.  This suggests the following general problem.\vspace{0.1cm}

\begin{problem} Determine $c_k:=\limsup_{m\to \infty} c_k(2m+1)$ for each $k\in \N$, where
\[ c_k(n):=\inf\left\{c: A\ssq G, |G|=n, |A|>c|G|\, \Rightarrow k\wedge A=G\right\}\, .\]\end{problem}

\section*{Acknowledgements}
The present paper is the first part of a joint work with Yahya ould Hamidoune. Our research project was ongoing when he passed away in March 2011.
He will keep on being a living source of inspiration to us, and we would like to express all our gratitude for his constant mathematical enthusiasm.

\end{document}